\newcommand{\dd}{\, \mathrm{d}}
\newcommand{\eps}{\varepsilon}
\newcommand{\hau}{d\mathcal{H}^2}
\newcommand{\N}{\mathbb{N}}
\newcommand{\R}{\mathbb{R}}
\renewcommand{\vec}{\boldsymbol}    
\newtheorem{theorem}{Theorem}
\newtheorem{lemma}{Lemma}
\newtheorem{proposition}{Proposition}
\newtheorem*{remark*}{Remark}
\newcommand{\lf}{\liminf_{j\to \infty}}
\newcommand{\intp}{\fint_{x^2+y^2<1}}
\newcommand{\vareg}{\widehat{\varepsilon}}
\title{Dimension reduction for gradient damage models in slender rods}
\author{E.~Bonnetier\thanks{Institut Fourier, Universit\'e Grenoble-Alpes, France. Email: \texttt{Eric.Bonnetier@univ-grenoble-alpes.fr}.} 
        \hspace{1cm} D.~Henao\thanks{Instituto de Ciencias de la Ingenier{\'{\i}}a, Universidad de O'Higgins, Rancagua, Chile. Email: \texttt{duvan.henao@uoh.cl}.}
        \hspace{1cm} V.~Ramos\thanks{Departamento de Ingenier{\'{\i}}a Matem\'atica, Universidad de Chile, Chile.  Email:\texttt{vramos@dim.uchile.cl}.}
}
\date{} 
\begin{document}
\parindent 0cm

\maketitle

\begin{abstract}
This paper presents a method for reducing a three-dimensional gradient damage model 
to a one-dimensional model for slender rods (with a small radius-to-length ratio, 
$\delta = R/L \to 0$). The 3D model minimizes an energy functional that includes elastic 
strain energy, a damage-dependent degradation function $a_\eta(\alpha)$, a damage energy 
term $w(\alpha)$, and a gradient term penalizing abrupt damage variations. 
After non-dimensionalizing and rescaling, the problem is reformulated on a unit cylinder, 
and the behaviour of the energy functional is analyzed as $\delta$ approaches zero. 
Using $\Gamma$-convergence, we show that the sequence of 3D energy functionals 
converges to a~1D functional, defined over displacement and damage fields that are 
independent of transverse coordinates. Compactness results guarantee the weak convergence 
of strains and damage gradients, while lower and upper bound inequalities confirm the energy limit. Minimizers of the 3D energy are proven to converge to the minimizers of the 1D energy, with strains approaching a diagonal form indicative of uniaxial deformation. 
\medskip

\textbf{Keywords:} Gradient damage model, Dimensional reduction, 
$\Gamma$-convergence, Energy functional, Uniaxial deformation.
\end{abstract}


\section{Introduction}

Gradient damage models (see, e.g., \cite{marigo2016overview,ambrosio1990approximation,ambrosio1992approximation,braides1998approximation,bourdin2000numerical,bourdin2008variational,tanne2018crack,kumar2020revisiting,bourdin2025}, 
and the references therein) have been used to describe the nucleation and propagation of cracks 
in brittle and quasi-brittle materials, in very challenging problems such as
the formation of regularly spaced cracks with very complex geometries in thin films subject to thermal shocks \cite{bourdin2014morphogenesis,sicsic2014initiation},
or such as determining the causes of the damage observed in the ashlar masonry work 
of the French Panth\'eon \cite{lancioni2009variational}.
In these models, cracks correspond to localized bands, where an internal damage variable is activated
that reduces the stiffness of the material.
A first stage consists in the onset of damage from an initially elastic material, 
when the stress reaches a well-defined intrinsic limit, 
which can be identified in terms of the model parameters 
(and is independent of the domain size and shape, and of the loading history). 
As the loading increases, the level of damage raises until the maximal stress 
that the material can sustain is attained.
The response of the body upon further loading depends on its size relative 
to a regularization parameter $\ell$ specified in the model, 
which can be interpreted as internal characteristic length (another material property, 
as the elastic limit stress).
\medskip

In short rods, of length $L \sim \ell$, the homogeneous damage solutions
(where the damage variable is constant throughout the body) remain stable 
even for extreme loading conditions.
In contrast, in rods made of a stress-softening material (so that the elastic region in 
strain space shrinks as the damage progresses), the homogeneous damage solutions lose their 
stability, allowing the internal variable to continue its growth in narrow bands 
of width comparable to $\ell$.
\medskip

Gradient damage models have thus proved to be a consistent numerical approximation of
the propagation of a pre-existing crack in the Griffith model for brittle fracture~\cite{griffith1921vi,francfort1998revisiting}, 
to provide a mechanism for crack nucleation in a faultless material without geometrical 
singularities~\cite{chambolle2008nucleation},
and to capture size effects and softening properties which are significant in the 
behaviour of concrete, rock, and biomaterials.
In addition, gradient damage models overcome the spurious mesh dependency observed 
in local damage models~\cite{marigo1989constitutive,benallal1993,pham2010approche},
since the addition of a gradient term on the damage variable leads to a dissipated energy in
the localized solutions which is essentially proportional to the area of the crack,
as opposed to the failure without dissipated energy observed in the local models.
The initiation of cracks in more complicated three-dimensional geometries has been 
studied, e.g., in~\cite{tanne2018crack,kumar2020revisiting}, and a unified treatment 
of cohesive fracture and nucleation with an arbitrary strength surface is given 
in~\cite{bourdin2025}.
It was shown in \cite{bonnetier14faults} that variants of gradient damage models including 
plasticity and visco-elasticity are compatible with descriptions of the formation of 
geological faults.
\medskip

In such gradient damage models,
the capability for crack nucleation is associated to the 
loss of stability of the homogeneous damage state and the emergence of 
localized damage solutions. 
The previously mentioned stability 
analyses~\cite{pham2011stability,pham2013stability,marigo2016overview} 
have been conducted  mainly for uniaxial tension tests, using the variational inequalities 
of the formally derived one-dimensional gradient damage model. 
A validation of the important dimension reduction from 3D to 1D is thus desirable. 
This is the purpose of this current work:
we prove the $\Gamma$-convergence \cite{braides2002gamma,dalMaso2012,ambrosio2000calculus}
of the three-dimensional model to the one-dimensional gradient damage model in the slender 
rod limit.
\medskip

    An application of gradient damage models to bulk degradation of the rock mass 
    in underground mining, with a numerical observation of surface subsidence, has been proposed in~\cite{bonnetier2025gradient}.
    The present work has been motivated by the use that will be made of the simplified one-dimensional gradient damage model in the identification of parameters from uniaxial compression tests. That is a necessary step towards the more systematic study of three-dimensional damage models in the block caving problem, which is being studied by the CMM (Center for Mathematical Modeling, Universidad de Chile) Mining group.

\medskip
The precise formulation of the mathematical problem studied in this article is as follows: consider a sequence of cylindrical domains
\[
\Omega_j := \{\boldsymbol{x}=(x,y,z)\in \mathbb{R}^3: x^2+y^2\leq R_j, \hspace{0.2cm} 0\leq z\leq L\}, 
\quad j\in \N, \quad \text{with} \quad \delta_j := \frac{R_j}{L} \xrightarrow{j\to\infty} 0
\]
where \( L > 0 \) is fixed and \( R_j \to 0 \). 
An axial displacement \(\tilde{t}\) is imposed at the top boundary \(z = L\), 
producing a fixed, uniform, axial strain
\[
\varepsilon_z = \frac{\tilde{t}}{L}, \quad \text{uniform for all } j\in\mathbb{N}.
\]

The total energy associated with each configuration \((\tilde{\boldsymbol{u}}, \tilde{\alpha})\), 
with \( \tilde{\boldsymbol{u}} : \Omega_j \to \mathbb{R}^3 \) the displacement 
and \( \tilde{\alpha} : \Omega_j \to [0,1] \) the damage variable, is given by the functional:
\begin{align}
\label{eq:energy_physical}
\widetilde{E}_j[\tilde{\boldsymbol{u}}, \tilde{\alpha} ] = \int_{\Omega_j} \frac{1}{2} a_\eta(\tilde{\alpha} (\boldsymbol{X})) A \varepsilon : \varepsilon + w(\tilde{\alpha} (\boldsymbol{X})) + \frac{1}{2} w_1 \ell^2 |\nabla \tilde{\alpha} (\boldsymbol{X})|^2 \, d\boldsymbol{X},
\end{align}
where \(A\) denotes the linear elasticity tensor, $\varepsilon$ the strain tensor, 
\(a_\eta\) a degradation function, \(w\) a damage energy density, 
and \(\ell\) the fixed length-scale parameter (see Section \ref{se:preliminaries} for more details.)
We define the set of admissible configurations by
\begin{align} \label{eq:admissible}
\mathcal{A} := \left\{ (\boldsymbol{u}, \alpha) \in H^1\left(\Omega, \mathbb{R}^3\right) \times H^1(\Omega) \mid 0 \leq \alpha \leq 1 \text{ a.e.},\, u_3(\cdot,0) = 0,\, u_3(\cdot,1) = -\varepsilon_z \right\},
\end{align}
where $\Omega$ is the unit cylinder and $\displaystyle{\boldsymbol{u}=
 \big (u_1(\vec x), u_2(\vec x), u_3(\vec x)\big )= 
\left(\frac{\tilde{u}_1}{R_j}, \frac{\tilde{u}_2}{R_j}, \frac{\tilde{u}_3}{L}\right)}$ 
is the non-dimensionalized displacement, defined on the rescaled coordinates 
$\displaystyle{\boldsymbol{x}=\left( \frac{X}{R_j}, \frac{Y}{R_j}, \frac{Z}{L}\right)}$.
Additionally, we introduce the reduced ansatz space: 
\begin{align} \label{eq:uniaxial_space}
\mathcal{S} := \left\{(\vec u, \alpha)\in \mathcal{A} \mid \exists\, \bar{u}, \bar{\alpha} \in H^1(0,1) \text{ such that } u_3(x,y,z) = \bar{u}(z),\ \alpha(x,y,z) = \bar{\alpha}(z) \text{ a.e.} \right\}.
\end{align}
If a pair $(\vec u,\alpha)$ belongs to $\mathcal{S}$ then, with a slight abuse of notation, 
we write $u_3'(z)$ to denote $\displaystyle \frac{d \bar u_3}{dz}(z)$, in view of the definition of $\mathcal{S}$.
More generally, throughout the text, we consider any function $u \in L^2(0,1)$ as a function in $L^2(\Omega)$ and write indifferently $u(z) = u(\vec{x}) = u(x,y,z)$.
\medskip

Let $E_j$ denote the energy per unit volume $\displaystyle{\widetilde{E}_j\setminus |\Omega_j|}$
(the expression of which in the new variables is given in \eqref{eq:EjInExtense}).
We prove two main results~:
Firstly, we show that the sequence of functionals \((E_j)\) $\Gamma$-converges 
to the effective one-dimensional functional \(E_\infty\) defined by
\begin{align}
    \label{eq:defEinfty}
E_\infty [\vec u, \alpha] =
\begin{cases}
\displaystyle \int_0^1 a_\eta(\alpha(z)) \frac{1}{2} E  |u_3'(z)|^2 + w(\alpha(z)) + \frac{1}{2} w_1 \left( \frac{\ell}{L} \right)^2 |\alpha'(z)|^2 \, dz,
& ( \vec u,\alpha)\in \mathcal{S},
\\
+\infty, & (\vec u,\alpha) \in \mathcal{A}\setminus \mathcal{S}.
\end{cases}
\end{align}
More precisely, we prove that

\begin{theorem}\label{Teo2}
\begin{enumerate}
\item[i)] {($\Gamma$-$\liminf$ inequality)}

Let \( \{( \boldsymbol{u}^{(j)}, \alpha^{(j)})\}_{j \in \mathbb{N}} \) be a sequence 
in~\( \mathcal{A} \). 
Assume that for some $\left( \widehat{\vec u}, \widehat{\alpha} \right) \in \mathcal{A}$,
the sequence $\Big ( \overline{u}_3^{(j)} \Big )_{j\in\N}$ defined by \eqref{eq:horizontal_averages_u3}
converges weakly in $L^2(\Omega)$ to $\widehat{u}_3$, and that $\alpha^{(j)}$ converges strongly in $L^2(\Omega)$ to $\widehat{\alpha}$.
Then 
\begin{align*}
	 E_\infty[\widehat {\vec u}, \widehat \alpha]
	 \leq \liminf_{j\to\infty} E_j[\vec u^{(j)}, \alpha^{(j)}].
\end{align*}

\item[ii)] {(\, $\Gamma$-$\limsup$ inequality)}
 
Given any $(\vec u,\alpha)\in \mathcal{S}$, it is possible to construct a sequence $(\boldsymbol{u}^{(j)},\alpha^{(j)})$ in $\mathcal{A}$ such that
 $$ \big (u_3^{(j)}, \alpha^{(j)}\big) \quad \text{converges strongly in } L^{2}(\Omega)\times L^2(\Omega)
 \text{ to } \big (u_3, \alpha\big ), $$
    and
    \[
    \lim_{j \to \infty} E_j[{\vec u}^{(j)}, \alpha^{(j)}] =\int_0^1 a_\eta\big(\alpha(z)\big) \cdot  \frac{1}{2} E u_3'(z)^2 + w\big(\alpha(z)\big) + \frac{w_1 \ell^2}{2L^2} |\alpha'(z)|^2  \, \mathrm{d}z.
    \]
\end{enumerate}
\end{theorem}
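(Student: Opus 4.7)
The plan is to follow the standard two-step $\Gamma$-convergence scheme, respecting the rod scaling in the rescaled functional $E_j$.

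\textbf{Compactness and structure of the limit in (i).} I would first reduce to the nontrivial case in which $\liminf_j E_j[\vec u^{(j)},\alpha^{(j)}]<\infty$ and, passing to a subsequence, assume this liminf is a finite limit. The gradient term in the rescaled functional penalizes the transverse derivatives of $\alpha^{(j)}$ by a factor $\delta_j^{-2}$, so the energy bound forces $\partial_x\alpha^{(j)},\partial_y\alpha^{(j)}\to 0$ strongly in $L^2(\Omega)$. Combined with the hypothesized strong $L^2$ convergence $\alpha^{(j)}\to\widehat\alpha$, this gives $\widehat\alpha(\vec x)=\widehat\alpha(z)$ and (via an $H^1$ bound) $\alpha^{(j)}\rightharpoonup\widehat\alpha$ in $H^1(\Omega)$. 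Analogously, under the coercivity of $A$ and the assumption $a_\eta(\cdot)\geq\eta>0$, the rescaled shear $\varepsilon_{xz}^{(j)}=\tfrac12(\delta_j\partial_z u_1^{(j)}+\delta_j^{-1}\partial_x u_3^{(j)})$ is bounded in $L^2$; multiplying by $\delta_j$ yields $\partial_x u_3^{(j)}\to 0$ (and symmetrically $\partial_y u_3^{(j)}\to 0$) in $L^2$, so any weak $L^2$ limit of $u_3^{(j)}$ is independent of $(x,y)$. The hypothesis on the horizontal averages $\overline u_3^{(j)}\rightharpoonup\widehat u_3$ identifies this limit and ensures $(\widehat{\vec u},\widehat\alpha)\in\mathcal{S}$ (otherwise $E_\infty=+\infty$ and the inequality is trivial).

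\textbf{Lower bound in (i).} The term $\int w(\alpha^{(j)})$ converges to $\int w(\widehat\alpha)$ by continuity of $w$ and strong $L^2$ convergence of $\alpha^{(j)}$. The damage-gradient term contributes $\frac{w_1\ell^2}{2L^2}\int_0^1 |\widehat\alpha'(z)|^2\,dz$ by weak lower semicontinuity of the $L^2$ norm applied to $\partial_z\alpha^{(j)}$, with the nonnegative transverse contribution simply discarded. The delicate term is the elastic energy. I would exploit the pointwise inequality $A\varepsilon{:}\varepsilon\geq E\,\varepsilon_{zz}^2$, obtained by minimizing the quadratic form over the transverse strain components at fixed $\varepsilon_{zz}$ (the uniaxial stress state, with $E$ Young's modulus). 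This reduces matters to $\liminf_j\int\tfrac12 a_\eta(\alpha^{(j)})|\varepsilon_{zz}^{(j)}|^2\,d\vec x\geq \int\tfrac12 a_\eta(\widehat\alpha)\,E\,|\widehat u_3'(z)|^2\,d\vec x$. Since $\varepsilon_{zz}^{(j)}=\partial_z u_3^{(j)}$ has horizontal averages converging weakly in $L^2$ to $\widehat u_3'$ and $a_\eta(\alpha^{(j)})$ converges strongly in $L^2$, a classical Ioffe-type lower semicontinuity result (convex in the gradient variable, continuous in a strongly converging parameter) closes the argument.

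\textbf{Recovery sequence in (ii).} Given $(\vec u,\alpha)\in\mathcal{S}$ with $u_3=\bar u(z)$, $\alpha=\bar\alpha(z)$, I would first approximate $(\bar u,\bar\alpha)$ by smooth functions respecting the boundary conditions and the constraint $0\le\alpha\le 1$, and take a diagonal sequence at the end. For smooth data, set $u_3^{(j)}(\vec x):=\bar u(z)$, $\alpha^{(j)}(\vec x):=\bar\alpha(z)$, and use the Poisson-contraction ansatz $u_1^{(j)}(\vec x):=-\nu\bar u'(z)\,x$, $u_2^{(j)}(\vec x):=-\nu\bar u'(z)\,y$. A direct computation shows that the rescaled strain realizes the pointwise minimum $\tfrac12 A\varepsilon{:}\varepsilon=\tfrac12 E|\bar u'(z)|^2$ up to terms vanishing with $\delta_j$ (the $\bar u''(z)$ contributions in $\varepsilon_{xz},\varepsilon_{yz}$ come multiplied by $\delta_j$), while $|\nabla\alpha^{(j)}|^2$ produces exactly $(\ell/L)^2|\bar\alpha'(z)|^2$. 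Summing the three terms yields $E_j[\vec u^{(j)},\alpha^{(j)}]\to E_\infty[\vec u,\alpha]$; the required strong $L^2$ convergences of $u_3^{(j)}$ and $\alpha^{(j)}$ are immediate.

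\textbf{Main obstacle.} The principal difficulty is the elastic lower bound in (i): only specific combinations of derivatives of $\vec u^{(j)}$ are controlled (for instance $\delta_j^{-1}\partial_x u_3^{(j)}$ is bounded only coupled with $\delta_j\partial_z u_1^{(j)}$), so one cannot directly pass to a weak limit in the full strain tensor. The way out is to avoid doing so entirely and perform the transverse minimization pointwise inside the integral, reducing the semicontinuity problem to a scalar convex integrand in $\varepsilon_{zz}$, for which only the hypothesized averaged convergence is needed.
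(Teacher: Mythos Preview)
Your overall strategy matches the paper's: the pointwise inequality $\tfrac12 A\varepsilon{:}\varepsilon\geq \tfrac{E}{2}\varepsilon_{33}^2$ (obtained by minimizing over the transverse strain components) is exactly the algebraic identity the paper derives, and the Poisson-contraction ansatz in (ii) is the same. Two points deserve care.

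In (i), your claim that $\partial_x u_3^{(j)}\to 0$ in $L^2$ does not follow: multiplying the bound on $\varepsilon_{13}^{(j)}$ by $\delta_j$ gives $\|\delta_j^2\partial_z u_1^{(j)}+\partial_x u_3^{(j)}\|_{L^2}\to 0$, but the energy does not control $\partial_z u_1^{(j)}$ separately, so you cannot isolate $\partial_x u_3^{(j)}$. Fortunately this step is unnecessary: $\widehat u_3$ is the weak $L^2$ limit of the averages $\overline u_3^{(j)}$, which are already functions of $z$ only, so $\widehat u_3$ is automatically independent of $(x,y)$. Also, your Ioffe argument requires the full weak $L^2(\Omega)$ convergence of $\partial_z u_3^{(j)}$ (which does follow from the energy bound), not merely convergence of its horizontal averages; the passage from the weak limit $\widehat\varepsilon_{33}(x,y,z)$ to $\widehat u_3'(z)$ then uses Jensen on each slice, exploiting that $\widehat\alpha$ depends only on $z$. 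The paper carries this out explicitly, writing $a_\eta(\alpha^{(j)})|\partial_z u_3^{(j)}|^2=\big|\sqrt{a_\eta(\alpha^{(j)})}\,\partial_z u_3^{(j)}\big|^2$ and checking weak convergence of the product directly.

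In (ii), your density-plus-diagonal argument is valid. The paper takes a slightly different route: it keeps general $H^1$ data $(\bar u,\bar\alpha)$ and mollifies only $\bar u'$ in the definition of the transverse components $u_1^{(j)},u_2^{(j)}$, choosing a $j$-dependent mollification scale $k_j\sim\delta_j^{-1/2}$ so that $\delta_j\|\partial_z\varepsilon_{33}^{(j)}\|_{L^\infty}\to 0$. This avoids the density step and the diagonal extraction, but yields the same limit.
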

\medskip 

Secondly, we prove that if~$( \boldsymbol{u}^{(j)}, \alpha^{(j)})_{j \in \mathbb{N}}$
is a sequence of minimizers of~$E_j$, a stronger result holds
\begin{theorem}\label{Main}
Suppose that for each $j\in\N$
\begin{eqnarray*}
(\vec u^{(j)}, \alpha^{(j)}) \ \text{minimizes}\  E_j[\vec u, \alpha]
\ \text{in}\ \mathcal A.
\end{eqnarray*}
Then there exists a pair $(\widehat{\vec u},\widehat{\alpha})$ in $\mathcal {S}$
such that (for a subsequence)
\begin{eqnarray}
            &&
                \nonumber
            \int_\Omega |u_3^{(j)}(x,y,z) - \widehat{u}_3(z)|^2 d\vec x \longrightarrow 0, \quad
          \int_\Omega \left |\frac{\partial u_3^{(j)}}{\partial z}(x,y,z) - \frac{d \widehat{u}_3}{d z}(z)\right |^2d\vec x
          \longrightarrow{0},
          \\
          &&
            \label{eq:strainL2converges}
          \int_\Omega \left |\frac{\partial u_1^{(j)}}{\partial x}(x,y,z) + \nu \frac{d \widehat{u}_3}{d z}(z)\right |^2d\vec x
          \longrightarrow{0},
          \quad
          \int_\Omega \left |\frac{\partial u_2^{(j)}}{\partial y}(x,y,z) + \nu \frac{d \widehat{u}_3}{d z}(z)\right |^2d\vec x
          \longrightarrow{0},
          \\
          &&
           \nonumber
          \int_\Omega \left | \frac{\partial u_1^{(j)}}{\partial y} + \frac{\partial u_2^{(j)}}{\partial x}\right|^2
          +\left |\delta_j \frac{\partial u_1^{(j)}}{\partial z} + \delta_j^{-1} \frac{\partial u_3^{(j)}}{\partial x} \right|^2
          +\left |\delta_j \frac{\partial u_2^{(j)}}{\partial z} + \delta_j^{-1} \frac{\partial u_3^{(j)}}{\partial y} \right|^2d\vec x
          \longrightarrow 0,
    \\
    &&
        \nonumber
    \qquad \alpha^{(j)} \xrightarrow{H^1 (\Omega)} \widehat \alpha,
          \quad \text{and}\quad (\widehat{\vec u}, \widehat{\alpha})\ \text{minimizes}\ E_\infty[\vec u, \alpha]\ \text{in}\ \mathcal {S}.
\end{eqnarray}
\end{theorem}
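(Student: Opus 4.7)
The plan is a standard compactness plus $\Gamma$-convergence argument, supplemented by a sum-of-squares rewriting of the elastic energy to upgrade the weak convergences produced by compactness into the strong $L^2$ convergences claimed in~\eqref{eq:strainL2converges}. Testing $E_j$ on the admissible affine competitor $\vec u(x,y,z)=(\nu\varepsilon_z x,\,\nu\varepsilon_z y,\,-\varepsilon_z z)$ and $\alpha\equiv 0$ yields a uniform upper bound $E_j[\vec u^{(j)},\alpha^{(j)}]\le C$. Since $A$ is coercive and $a_\eta\ge\eta>0$, this controls $\alpha^{(j)}$ in $H^1(\Omega)$ and each of the six scaled strain components entering $E_j$ in $L^2(\Omega)$. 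In particular, $\delta_j\partial_z u_1^{(j)}$ and $\delta_j\partial_z u_1^{(j)}+\delta_j^{-1}\partial_x u_3^{(j)}$ being both bounded in $L^2$ forces $\|\partial_x u_3^{(j)}\|_{L^2}=O(\delta_j)$, and analogously $\|\partial_y u_3^{(j)}\|_{L^2}=O(\delta_j)$. Hence $u_3^{(j)}$ is bounded in $H^1(\Omega)$; after extraction, Rellich gives $u_3^{(j)}\to\widehat u_3$ strongly in $L^2(\Omega)$, and since the transverse derivatives tend to zero, $\widehat u_3$ depends only on $z$. Similarly $\alpha^{(j)}\to\widehat\alpha$ strongly in $L^2(\Omega)$ and weakly in $H^1(\Omega)$, and the horizontal averages $\overline u_3^{(j)}$ converge weakly in $L^2$ to $\widehat u_3$.

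\textbf{Identification and optimality.} Theorem~\ref{Teo2}(i) applied to this subsequence yields $E_\infty[\widehat{\vec u},\widehat\alpha]\le\liminf_j E_j[\vec u^{(j)},\alpha^{(j)}]$; finiteness of the left-hand side forces $(\widehat{\vec u},\widehat\alpha)\in\mathcal S$ by~\eqref{eq:defEinfty}. For any $(\vec u^*,\alpha^*)\in\mathcal S$, Theorem~\ref{Teo2}(ii) provides a recovery sequence $(\vec v^{(j)},\beta^{(j)})$ with $E_j[\vec v^{(j)},\beta^{(j)}]\to E_\infty[\vec u^*,\alpha^*]$; the minimality of $(\vec u^{(j)},\alpha^{(j)})$ then produces the chain
\[
E_\infty[\widehat{\vec u},\widehat\alpha]\le\liminf_j E_j[\vec u^{(j)},\alpha^{(j)}]\le\limsup_j E_j[\vec u^{(j)},\alpha^{(j)}]\le E_\infty[\vec u^*,\alpha^*].
\]
Taking $(\vec u^*,\alpha^*)$ arbitrary in $\mathcal S$ shows that $(\widehat{\vec u},\widehat\alpha)$ minimizes $E_\infty$, and in particular $\lim_j E_j[\vec u^{(j)},\alpha^{(j)}]=E_\infty[\widehat{\vec u},\widehat\alpha]$.

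\textbf{From weak to strong convergence.} I next invoke the classical isotropic-elasticity identity
\[
\frac12 A\varepsilon:\varepsilon=\frac E2\,\varepsilon_{zz}^2+Q\big(\varepsilon_{xx}+\nu\varepsilon_{zz},\,\varepsilon_{yy}+\nu\varepsilon_{zz},\,\varepsilon_{xy},\,\varepsilon_{xz},\,\varepsilon_{yz}\big),
\]
where $Q$ is positive-definite on the five residual strain components and $E,\nu$ are the Young modulus and Poisson ratio of $A$. Substituting the $\delta_j^{\pm 1}$-scaled strains rewrites $E_j=A_j+R_j+W_j+B_j$ with all four summands non-negative: $A_j=\int\frac12 a_\eta(\alpha^{(j)})E(\partial_z u_3^{(j)})^2$, $W_j=\int w(\alpha^{(j)})$, $B_j$ the scaled damage-gradient term, and $R_j$ a positive combination of exactly the five squared quantities appearing in~\eqref{eq:strainL2converges}. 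Strong $L^2$-convergence of $\alpha^{(j)}$ and continuity of $a_\eta,w$ give $W_j\to\int w(\widehat\alpha)$ together with the weak-lsc bounds $\liminf A_j\ge A_\infty$ and $\liminf B_j\ge B_\infty$, where $A_\infty+B_\infty+\int w(\widehat\alpha)=E_\infty[\widehat{\vec u},\widehat\alpha]$. Since $\lim(A_j+R_j+B_j)=A_\infty+B_\infty$, extracting a further subsequence along which $A_j,B_j,R_j$ converge separately forces each inequality to be an equality and $R_j\to 0$. The vanishing of $R_j$ (with $a_\eta\ge\eta$) gives strong $L^2$-convergence to zero of the five residuals; the equality $\lim A_j=A_\infty$ combined with weak convergence $\partial_z u_3^{(j)}\rightharpoonup\widehat u_3'$ upgrades it to strong $L^2$-convergence, which with the residuals yields the Poisson-contraction claims in~\eqref{eq:strainL2converges}; $\lim B_j=B_\infty$ gives $\alpha^{(j)}\to\widehat\alpha$ strongly in $H^1(\Omega)$; and the $L^2$-convergence $u_3^{(j)}\to\widehat u_3$ was already obtained at the compactness stage.

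\textbf{Main obstacle.} The technical heart of the proof is establishing the decomposition $E_j=A_j+R_j+W_j+B_j$ with a non-negative remainder whose building blocks match term-by-term the residuals in~\eqref{eq:strainL2converges}. This requires writing $A$ in terms of $E$ and $\nu$, completing the square about $\varepsilon_{zz}$, and verifying that positive definiteness of $Q$ is preserved after substitution of the $\delta_j^{\pm 1}$-scaled strains. Once this identity is in place, the passage from the single scalar equality $\lim(A_j+R_j+B_j)=A_\infty+B_\infty$ to the three individual limits is a routine subsequence/diagonal argument.
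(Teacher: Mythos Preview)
Your overall architecture (uniform energy bound $\to$ compactness $\to$ $\Gamma$-convergence chain $\to$ energy-equality forces residuals to vanish) matches the paper's, and your sum-of-squares decomposition $\tfrac12 A\varepsilon:\varepsilon=\tfrac{E}{2}\varepsilon_{33}^2+Q(\cdot)$ is essentially a reorganization of the paper's Proposition~\ref{Prop1}. There is, however, a genuine gap in your compactness step.

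\medskip
\textbf{The gap.} You assert that the uniform bound on $E_j$ controls ``each of the six scaled strain components'' and then treat $\delta_j\partial_z u_1^{(j)}$ as one of them. It is not: coercivity of $A$ controls only the symmetrized entries $\varepsilon_{13}^{(j)}=\tfrac12\big(\delta_j\partial_z u_1^{(j)}+\delta_j^{-1}\partial_x u_3^{(j)}\big)$, never the two summands separately. So from $\|\varepsilon_{13}^{(j)}\|_{L^2}\le C$ you \emph{cannot} conclude $\|\partial_x u_3^{(j)}\|_{L^2}=O(\delta_j)$, and hence you have no $H^1(\Omega)$ bound on $u_3^{(j)}$ and no Rellich argument giving $u_3^{(j)}\to\widehat u_3$ in $L^2(\Omega)$. (Indeed, in the recovery sequence of Section~\ref{se:EUB} one has $\partial_z u_1^{(j)}\sim\delta_j^{-1/2}$; nothing in the energy forbids much worse behaviour.) This same gap contaminates your later claim ``$\partial_z u_3^{(j)}\rightharpoonup\widehat u_3'$'': from the energy you only get $\partial_z u_3^{(j)}\rightharpoonup\widehat\varepsilon_{33}$ in $L^2(\Omega)$ for some $\widehat\varepsilon_{33}$ whose \emph{horizontal average} is $\widehat u_3'$; the identification $\widehat\varepsilon_{33}(x,y,z)=\widehat u_3'(z)$ is an extra step.

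\medskip
\textbf{How the paper handles this.} The paper never claims an $H^1(\Omega)$ bound on $u_3^{(j)}$. Instead, Theorem~\ref{Teo1} works with the horizontal averages $\overline u_3^{(j)}(z)$, which \emph{are} bounded in $H^1(0,1)$ because $\partial_z u_3^{(j)}$ is bounded in $L^2(\Omega)$. The identification $\widehat\varepsilon_{33}=\widehat u_3'$ is then forced a posteriori by the vanishing of the Jensen-defect term \eqref{eq:limitJensen} (one of the non-negative residuals that your equality $\lim E_j=E_\infty$ kills). Only after $\partial_z u_3^{(j)}\to\widehat u_3'$ strongly is established does the paper recover $u_3^{(j)}\to\widehat u_3$ in $L^2(\Omega)$, via the one-line estimate $u_3^{(j)}(x,y,z)-\widehat u_3(z)=\int_0^z\big(\partial_z u_3^{(j)}-\widehat u_3'\big)\,ds$ using the boundary condition at $z=0$. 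If you reorganize your argument along these lines---replace the $H^1(\Omega)$ compactness of $u_3^{(j)}$ by compactness of the averages, add the Jensen-defect to your residual $R_j$, and postpone $\|u_3^{(j)}-\widehat u_3\|_{L^2}\to 0$ to the end---the rest of your proof goes through.
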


\begin{remark*}
\begin{itemize}
\item[1.]
 Theorem~\ref{Teo2} is a $\Gamma$-convergence result, but for a non-standard topology. Its proof is based
 on Theorem~3 below, a compactness result, which characterizes this `dimension reduction' topology.
 The latter Theorem indeed describes the structure of weak limits of admissible $(\vec{u}^{(j)}, \alpha^{(j)})$'s
when only some of the components of the displacement fields or their derivatives  can be expected to be controlled, 
when the energies $E(\vec{u}^{(j)}, 
\alpha^{(j)})$ are uniformly bounded.
\item[2.]   
Concerning Theorem~2, its proof combines Theorem~3
with a precise lower bound on the energies, when the convergences 
in Theorem~3 hold. This lower bound is derived in Proposition~1.
\item[3.] In addition, under the hypotheses of Theorem~2,
we show strong convergence of the minimizers 
(or, more precisely, on the components $u_3^{(j)}$, on the strain tensors $\varepsilon^{(j)}$, and on the damage variables $\alpha^{(j)}$ and their gradients),
another particular feature of the underlying `dimension reduction' topology.
\item[4.]
As another feature of this topology,
one can choose $u_3^{(j)}(x,y,z) = \widehat{u}_3(z)$ to construct a recovery sequence
in the proof of Theorem~1, i.e., a function which is independent of $j$ (and of $x,y$). The horizontal components $u_1^{(j)}$, $u_2^{(j)}$ do depend on $j$ in the recovery sequence, even with gradients that grow unbounded as $j\to\infty$, but their contribution to the energy becomes negligible in the slender rod limit because of the prefactor $\delta_j$ in the rescaled shear strains in \eqref{eq:strainL2converges}, \eqref{eq:EjInExtense}. The system is dominated by the behaviour of the axial displacement $u_3$, to which the horizontal components are able to adjust, guided by the energy-minimality criterion of reducing, as much as possible, the shear strains (see Equations \eqref{eq:aspiration_UB}, \eqref{eq:horizontal_adapt}, \eqref{eq:test_function}, and \eqref{eq:uniaxial_strain}).
This is consistent with the way in which the limit functional $E_\infty$  depends on the vectorial displacement field $\vec u$: through the axial component $u_3$ only.
\item[5.] In this work, the internal characteristic length~$\ell$ and
the regularization parameter $\eta$ are fixed positive quantities,
so that for fixed damage $\alpha$, the associated elastic displacement
is the solution to an elliptic PDE. It would be interesting to study
under which regimes the dimension reduction could be performed, when
the parameters $\ell_j$, $\eta_j$ are allowed to tend to 0. 
\end{itemize}
\end{remark*}
\medskip

An extensive literature is available for the rigorous derivation of reduced models 
obtained from three-dimensional elasticity (see, e.g., \cite[Chapter 16]{antman2005} 
for linear theories of rods; \cite[Chapters A.1 and B.5]{ciarlet2021volII} for Kirchhoff-Love and von K\'arm\'an equations in the theory of plates,
\cite{FJM2006} and~\cite{neukammRichter2024} for models for plates and rods in the context 
of nonlinear elasticity).
The techniques we use here are inspired by the rigorous derivation of the model for fracture 
and delamination of a thin plate on an elastic foundation, proposed in~\cite{baldelli2014},
and by the dimension reduction analysis of a brittle Kirchhoff-Love plate in the SBD setting
derived~in \cite{babadjianHenao2016}.
\medskip

The structure of the article is as follows: 
Section 2 presents preliminary definitions, notation, and the assumptions under
which our analysis is carried out.
In Section 3, we detail the non-dimensionalization and rescaling process.
In Section~4, we proceed with a discussion on compactness results, while
Section 5 is dedicated to deriving the lower and upper bounds for the energy 
functionals $E_j$.
Finally, the proof of Theorem~\ref{Main} is assembled in Section~\ref{se:minimizers}.

\section{Preliminaries}
\label{se:preliminaries}

In this section  we introduce the definitions, notation, and assumptions 
on which we base the reduction of the three-dimensional gradient damage model 
to a one-dimensional model.

\subsubsection*{The domain and admissible displacements}

For a slender rod with \( R \ll L \), we define the domain:
\[
\tilde{\Omega} := \{ \boldsymbol{X} = (X, Y, Z) \in \mathbb{R}^3 \mid X^2 + Y^2 < R^2, \ 0 < Z < L \}
\]
(tildes are used for sets and functions in the physical domain, 
and will be dropped after non-dimensionalization; 
vector quantities are represented in boldface).
We denote by $\tilde{u} \in H(\tilde{\Omega}; \mathbb{R}^3)$
\begin{eqnarray*}
        \tilde{u}:&& \tilde{\Omega} \to \mathbb{R}^3, \quad \\
   && \boldsymbol{X} \longmapsto\tilde{u}(\boldsymbol{X}) = (\tilde{u}_1(\boldsymbol{X}), \tilde{u}_2(\boldsymbol{X}), \tilde{u}_3(\boldsymbol{X}   ))
\end{eqnarray*}
the displacement of each material point  relative to its initial position,
while $\tilde{\alpha}\in H^1(\tilde(\Omega))$ denotes the damage.
We assume that the rod $\Omega$ is subject to the following mixed boundary conditions
\begin{eqnarray} \label{bc's}
\tilde{u}_3 (X, Y, L) = -\tilde{t}, \quad \tilde{u}_3 (X, Y, 0) = 0, \quad \forall X, Y \text{ such that } X^2 + Y^2 < R^2.
\end{eqnarray}
Our analysis is motivated by the modeling of uniaxial compressive tests ($\tilde t>0$), 
however, it remains valid for a rod in tension ($\tilde t<0$).
\medskip

The equilibrium state under the action of $\tilde{t}$ is defined as the
minimizer of the following 3D gradient damage energy, among all fields 
$(\tilde{\bf u},\alpha) \in H(\tilde{\Omega}; \mathbb{R}^3) \times H^1(\tilde{\Omega})$
that statisfy the boundary conditions~\eqref{bc's}.
\[
\int_{\tilde{\Omega}} \frac{1}{2} a_\eta(\tilde \alpha(\boldsymbol{X})) A \varepsilon : \varepsilon + w(\tilde{\alpha}(\boldsymbol{X})) + \frac{1}{2} w_1 \ell^2 |\nabla \tilde{\alpha}(\boldsymbol{X})|^2 \, d\boldsymbol{X}
\]
Each term is explained below.

\subsubsection*{The strain tensor $\varepsilon$}

The strain tensor $\varepsilon$, associated to the deformation of the body, 
is the symmetric rank-2 tensor defined by
\begin{equation*}
    \varepsilon = \frac{1}{2} \left( \nabla \tilde{u} + \nabla \tilde{u}^{T} \right)
    =
    \begin{bmatrix}
        \varepsilon_{11} & \varepsilon_{12} & \varepsilon_{13} \\
        \varepsilon_{21} & \varepsilon_{22} & \varepsilon_{23} \\
        \varepsilon_{31} & \varepsilon_{32} & \varepsilon_{33}
    \end{bmatrix},
    \qquad
    \nabla \tilde{u} =
      \begin{bmatrix}
        \displaystyle  \frac{\partial \tilde{u}_1}{\partial X} & \displaystyle \frac{\partial \tilde{u}_1}{\partial Y} & \displaystyle \frac{\partial \tilde{u}_1}{\partial Z} \\\\
        \displaystyle  \frac{\partial \tilde{u}_2}{\partial X} & \displaystyle  \frac{\partial \tilde{u}_2}{\partial Y} & \displaystyle  \frac{\partial \tilde{u}_2}{\partial Z} \\\\
        \displaystyle  \frac{\partial \tilde{u}_3}{\partial X} & \displaystyle  \frac{\partial \tilde{u}_3}{\partial Y} & \displaystyle \frac{\partial \tilde{u}_3}{\partial Z}
    \end{bmatrix}.
\end{equation*}

\subsubsection*{The elastic constants}

Before damage, the material is governed by an isotropic linear elastic
stress-strain relation, of the form
\begin{eqnarray*}
\sigma &=& A \varepsilon \;:=\; 2\mu \varepsilon + \lambda (\text{tr} \varepsilon), I
\end{eqnarray*}
where the Lam\'e coefficients \( \lambda \) and \( \mu\) respectively measure 
the material's volumetric (compressive) response, and the material's rigidity. 
The Young's modulus \( E \) and Poisson's ratio \( \nu \) are given by
\begin{eqnarray}\label{Young}
    E \;=\; \frac{\mu(3\lambda + 2\mu)}{\lambda + \mu} &\quad\text{and}\quad& 
    \nu \;=\; \frac{\lambda}{2(\lambda + \mu)}.
\end{eqnarray}

\subsubsection*{The damage variable $\tilde{\alpha}$}

The scalar field $\tilde{\alpha} \in H^1(\tilde{\Omega})$
\begin{eqnarray*}
\tilde{\alpha}:&&\tilde{\Omega} \to [0,1],\\
    &&\boldsymbol{X} \longmapsto \tilde{\alpha}(\boldsymbol{X}) = \tilde{\alpha}(X, Y, Z).
\end{eqnarray*}
describes the distribution of material damage in the physical domain $\tilde{\Omega}$.
It can be interpreted as the impact on the macroscopic stiffness of the medium
of the presence of micro-cracks, of loss of internal cohesion, or of localized 
structural degradation. In our context,
\begin{itemize}
    \item $\tilde{\alpha}(\boldsymbol{X}) = 0$ indicates that the material is intact at point $\boldsymbol{X}$.
    \item $0 < \tilde{\alpha}(\boldsymbol{X}) < 1$ represents partial damage, which means that some mechanical properties, such as stiffness, have been reduced but are not completely lost. 
\item $\tilde{\alpha}(\boldsymbol{X}) = 1$ indicates that the material is completely damaged.
\end{itemize}
Note that $\tilde{\alpha}$ can vary in both the transverse ($X, Y$) 
and longitudinal ($Z$) directions.

\subsubsection*{The degradation function $a_\eta (\tilde{\alpha})$}

The degradation function $a_\eta~:\; [0,1] \to \mathbb{R}$ describes how the elastic stiffness
of the medium soften as damage increases. We assume that $a$ is a continuous, strictly decreasing function, and 
\begin{eqnarray*}
a_\eta(0) \;=\; 1,
&\;\textrm{and}\;& a_\eta(1) = \eta,
\end{eqnarray*}
so that the medium stiffness is that of the elastic phase when no damage
is present, while it degrades to $\eta A$  when fully damaged.

\subsubsection*{The energy associated with damage $w(\tilde{\alpha})$}

The local damage energy density, 
\begin{eqnarray*}
\begin{array}{ccl}
w~: [0,1] & \longrightarrow & \mathbb{\R},
\\
\tilde{\alpha} & \longrightarrow & w(\tilde{\alpha}),
\end{array}
\end{eqnarray*}
measures the pointwise contribution to the increase in damage energy 
when $\tilde{\alpha}$ grows.
We assume that $w$ is a smooth, increasing function and that
\begin{eqnarray*}
w(0) &=& 0.
\end{eqnarray*}
The material parameter $w_1$ that appears in the expression of total energy 
is defined to be the value of \( w(\tilde \alpha) \) when \( \tilde \alpha = 1 \)
\begin{eqnarray*}
w_1 &=& w(1).
\end{eqnarray*}

\subsubsection*{The damage distribution $w(\tilde{\alpha})$}

In the spirit of Mumford-Shah energies~\cite{ambrosio1990approximation, ambrosio1992approximation, bourdin2000numerical, bourdin2008variational, braides1998approximation, francfort1998revisiting}, the energy functional $E_j$
penalizes large values of $|\nabla \tilde{\alpha}({\bf X})|$, so that
damage remains uniform in large regions of $\tilde{\Omega}$, and smooth
distributions of damage are favored, in particular when the value of $w_1$
is large.
\medskip

\section{Non-dimensionalization and rescaling }

We rescale the original domain \( \tilde{\Omega} \) to a unit cylinder of the form
\[
\Omega:=\{\boldsymbol{x}=(x,y,z)\in \mathbb{R}^3:\  x^2+y^2<1,\ 0<z<1\},
\qquad 
x = \frac{X}{R}, \quad y = \frac{Y}{R}, \quad z = \frac{Z}{L}.
\]
Additionally, we define the aspect ratio $\delta$ and the rescaled damage and displacement 
\[
\delta:= \frac{R}{L},
\qquad \alpha (x,y,z) = \tilde{\alpha} (X,Y,Z),
\]
\[
u_1 (x,y,z) = \frac{\tilde{u}_1 (X,Y,Z)}{R}, \quad
u_2 (x,y,z) = \frac{\tilde{u}_2 (X,Y,Z)}{R}, \quad 
u_3 (x,y,z) = \frac{\tilde{u}_3 (X,Y,Z)}{L}.
\]
The normal and shear strains thus become
\begin{eqnarray*}
\varepsilon_{11} &=& \frac{\partial}{\partial x} \tilde{u}_1 (x,y,z) = R \frac{\partial u_1}{\partial x}\frac{\partial x}{\partial X} =  \frac{\partial u_1}{\partial x},
\quad \varepsilon_{22} = \frac{\partial u_2}{\partial y}, 
\quad \varepsilon_{33} = \frac{\partial u_3}{\partial z},
\\[3pt]
\varepsilon_{12} &=& \varepsilon_{21} =  \frac{1}{2} \left( \frac{\partial \tilde{u}_1}{\partial Y} + \frac{\partial \tilde{u}_2}{\partial X} \right) = \frac{1}{2} \left( R \frac{\partial u_1}{\partial Y}\frac{\partial Y}{\partial y} + R \frac{\partial u_2}{\partial X}\frac{\partial X}{\partial x} \right) = \frac{1}{2} \left( \frac{\partial u_1}{\partial y} + \frac{\partial u_2}{\partial x} \right),
\\[3pt]
\varepsilon_{13}  &=& \varepsilon_{31} 
= \frac{1}{2} \left( \frac{\partial \tilde{u}_1}{\partial Z} 
+ \frac{\partial \tilde{u}_3}{\partial X} \right) 
= \frac{1}{2} \left( R \frac{\partial u_1}{\partial z}\frac{\partial z}{\partial Z} 
+ L \frac{\partial u_3}{\partial x}\frac{\partial x}{\partial X} \right) 
= \frac{1}{2} \left( \frac{R}{L} \frac{\partial u_1}{\partial z} 
+ \frac{L}{R} \frac{\partial u_3}{\partial x} \right)
\\[3pt]
&=& \frac{1}{2} \left( \delta \frac{\partial u_1}{\partial z} + \delta^{-1} \frac{\partial u_3}{\partial x} \right),
\qquad 
\varepsilon_{23} = \varepsilon_{32} = \frac{1}{2} \left( \delta\frac{\partial u_2}{\partial z} + \delta^{-1}\frac{\partial u_3}{\partial y} \right).
\end{eqnarray*}

And we obtain the following expressions for the elastic energy density~:
\begin{eqnarray*}
\mu \varepsilon : \varepsilon &=& \mu \left( \varepsilon_{11}^2 + \varepsilon_{12}^2 + \varepsilon_{13}^2 + \varepsilon_{22}^2 + \varepsilon_{23}^2 + \varepsilon_{31}^2 + \varepsilon_{32}^2 + \varepsilon_{33}^2 \right)
\\
&=& \mu \left( \frac{\partial u_1}{\partial x} \right)^2 + \mu \left( \frac{\partial u_2}{\partial y} \right)^2 + \mu \left( \frac{\partial u_3}{\partial z} \right)^2 
\\ 
&&
+ \frac{\mu}{2} \left( \frac{\partial u_1}{\partial y} + \frac{\partial u_2}{\partial x} \right)^2
+ \frac{\mu}{2} \left( \delta \frac{\partial u_1}{\partial z} 
+ \delta^{-1} \frac{\partial u_3}{\partial x} \right)^2
+ \frac{\mu}{2}\left( \delta \frac{\partial u_2}{\partial z} + \delta^{-1} \frac{\partial u_3}{\partial y} \right)^2
\\
\frac{1}{2} A \varepsilon : \varepsilon &=& \mu \varepsilon : \varepsilon + \frac{\lambda}{2} (\text{tr} \varepsilon) I : \varepsilon 
\\ 
&=& \mu \left( \frac{\partial u_1}{\partial x} \right)^2 + \mu \left( \frac{\partial u_2}{\partial y} \right)^2 + \mu \left( \frac{\partial u_3}{\partial z} \right)^2
+ \frac{\lambda}{2} \left( \frac{\partial u_1}{\partial x} + \frac{\partial u_2}{\partial y} + \frac{\partial u_3}{\partial z} \right)^2
\\
&& 
+ \frac{\mu}{2} \left( \frac{\partial u_1}{\partial y} + \frac{\partial u_2}{\partial x} \right)^2 + \frac{\mu}{2} \left( \delta \frac{\partial u_1}{\partial z} + \delta^{-1} \frac{\partial u_3}{\partial x} \right)^2
+ \frac{\mu}{2} \left( \delta \frac{\partial u_2}{\partial z} + \delta^{-1} \frac{\partial u_3}{\partial y} \right)^2.
\end{eqnarray*}

On the other hand, rewriting $\alpha(\boldsymbol{x}) = \tilde{\alpha}({X})$ yields
\begin{eqnarray*}
\frac{\partial \alpha}{\partial x} 
&=& 
\frac{\partial \tilde{\alpha}}{\partial {X}} \cdot \frac{\partial {X}}{\partial x} = \frac{\partial \tilde{\alpha}}{\partial {X}} \cdot R
\quad \;\textrm{and}\;\quad 
\frac{\partial \tilde{\alpha}}{\partial {X}} = \frac{1}{R} \frac{\partial \alpha}{\partial x},
\qquad 
    \frac{\partial \tilde{\alpha}}{\partial {Y}} = \frac{1}{R} \frac{\partial \alpha}{\partial y}, \qquad
\frac{\partial \tilde{\alpha}}{\partial {Z}} = \frac{1}{L} \frac{\partial \alpha}{\partial z},
\\[3pt]
|\nabla \tilde{\alpha} (\boldsymbol{X})|^2 &=&  \frac{1}{R^2} \left( \frac{\partial \alpha}{\partial x} \right)^2 + \frac{1}{R^2} \left( \frac{\partial \alpha}{\partial y} \right)^2 + \frac{1}{L^2} \left( \frac{\partial \alpha}{\partial z} \right)^2
   = \frac{1}{L^2} \left[ \delta^{-2} \left( \frac{\partial \alpha}{\partial x} \right)^2 + \delta^{-2} \left( \frac{\partial \alpha}{\partial y} \right)^2 + \left( \frac{\partial \alpha}{\partial z} \right)^2 \right].
\end{eqnarray*}

As mentioned in the introduction, we consider a  sequence of cylindrical 
domains~$(\Omega_j)_{j \in \mathbb{N}}$
with diameter $\delta_j=R_j/L\xrightarrow{j\to\infty}0$.
In the non-dimensionalized variables, the total energy of damage gradient per unit volume~\eqref{eq:energy_physical} can be rewritten as:
\begin{eqnarray}\label{la1}
E_{j} [\boldsymbol{u}, \alpha] 
&:=&
\frac{1}{\pi}\int_{\Omega} a_{\eta} (\alpha (\boldsymbol{x})) 
\left[
\mu \left( \frac{\partial u_1}{\partial x} \right)^2
+ \mu \left( \frac{\partial u_2}{\partial y} \right)^2
+ \mu \left( \frac{\partial u_3}{\partial z} \right)^2
+ \frac{\lambda}{2} \left( \frac{\partial u_1}{\partial x} + \frac{\partial u_2}{\partial y} + \frac{\partial u_3}{\partial z} \right)^2
\right.\nonumber
\\ 
&&\left.+ \frac{\mu}{2}\left( \frac{\partial u_1}{\partial y}+ \frac{\partial u_2}{\partial x} \right)^2
+ \frac{\mu}{2} \left( \delta_j \frac{\partial u_1}{\partial z} + \delta_j^{-1} \frac{\partial u_3}{\partial x} \right)^2 + \frac{\mu}{2} \left( \delta_j \frac{\partial u_2}{\partial z} + \delta_j^{-1} \frac{\partial u_3}{\partial y} \right)^2\right] 
\nonumber 
\\
\label{eq:EjInExtense}
&&+ w (\alpha (\boldsymbol{x})) + \frac{1}{2} w_1 \left( \frac{l}{L} \right)^2 \left[ \delta_j^{-2} \left( \frac{\partial \alpha}{\partial x} \right)^2 + \delta_j^{-2} \left( \frac{\partial \alpha}{\partial y} \right)^2 + \left( \frac{\partial \alpha}{\partial z} \right)^2 \right] d\boldsymbol{x}.
\end{eqnarray}
\medskip

\section{Compactness}
    \label{se:compactness}

In this section, we characterize the structure of the weak convergence
limits of sequences of admissible fields $({\bf u}^j, \alpha^j)$.

\begin{theorem}\label{Teo1}
Suppose that \( \{( \boldsymbol{u}^{(j)}, \alpha^{(j)})\}_{j \in \mathbb{N}} \) 
is a sequence in \( \mathcal{A} \) such that for some $M >0 $
\[
\forall\; j \in  \mathbb{N},\quad
 E_j [( \boldsymbol{u}^{(j)}, \alpha^{(j)})] \leq M,
\]
with $E_j$ defined by \eqref{eq:EjInExtense}. 
For each \( j \in \mathbb{N} \), let \( \overline{u}_3^{(j)} : (0,1) \to \mathbb{R} \) 
denote the horizontal average of $u_3$, defined by
\begin{align}
    \label{eq:horizontal_averages_u3}
    \overline{u}_3^{(j)} (z) := \fint_{x_1^2+x_2^2 < 1} u_3^{(j)} (x,y,z) \, d\mathcal{H}^2 (x,y).
\end{align}
Then, there exists 
$\widehat{\varepsilon}_{11}, \widehat{\varepsilon}_{22}, \widehat{\varepsilon}_{33} $ in $ L^2(\Omega)$
and $\widehat \alpha \) in \(H^1(\Omega)$, such that, up to extraction of a subsequence (not relabelled), 

\begin{gather*}
\frac{\partial u_1^{(j)}}{\partial x} \rightharpoonup \widehat{\varepsilon}_{11}, \quad
\frac{\partial u_2^{(j)}}{\partial y} \rightharpoonup \widehat{\varepsilon}_{22}, \quad
\frac{\partial u_3^{(j)}}{\partial z} \rightharpoonup \widehat{\varepsilon}_{33},
\\
\alpha^{(j)} \rightharpoonup  \widehat{\alpha} \hspace{0.2cm}\in H^{1}(\Omega),
\quad \alpha^{(j)} \to \widehat\alpha \;\text{a.e.~in}\ \Omega,
\quad 0\leq \widehat{\alpha}\leq 1\ \text{a.e.~in}\ \Omega,
\end{gather*}
and such that $\overline{u}_3^{(j)}$ converges,
weakly in $H^1(0,1)$ and strongly in $L^2(0,1)$,
to the function 
\( \widehat{u}_3 : \Omega \to \mathbb{R} \) defined (up to a Lebesgue-null set) by
\begin{eqnarray}\label{la0}
\widehat{u}_3 (x,y,z) = \widehat{u}_3 (z) := \int_0^z \varepsilon_{33}(s) ds, \quad
\textrm{where}\quad
\varepsilon_{33}(s) := \fint_{x^2 + y^2 < 1} \widehat{\varepsilon}_{33} (x,y,s) 
\, d\mathcal{H}^2 (x,y).
\end{eqnarray}
Furthermore, given any pair of functions $u_1$ and $u_2$ in $H^1(\Omega)$, 
the pair $(\vec{u},\alpha) \in H^1(\Omega; \R^3) \times H^1(\Omega)$ ,
with ${\bf u} = (u_1, u_2, \widehat{u}_3)$, belongs to $\mathcal S$.
\end{theorem}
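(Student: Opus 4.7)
The plan is to read off from the energy bound $E_j \le M$ separate $L^2$ bounds for each nonnegative summand in \eqref{eq:EjInExtense} (using $a_\eta \ge \eta > 0$ and $w\ge 0$), then extract weakly convergent subsequences of the relevant strain components and of $\alpha^{(j)}$, and finally identify the limit $\widehat u_3$ from the boundary condition and a test‑function computation. Concretely, the energy bound gives uniform $L^2$ control of $\partial_x u_1^{(j)}$, $\partial_y u_2^{(j)}$, $\partial_z u_3^{(j)}$, of the three combined shears, of $w(\alpha^{(j)})$, and most importantly of
\[
\delta_j^{-2}\bigl(\|\partial_x \alpha^{(j)}\|_{L^2}^2 + \|\partial_y \alpha^{(j)}\|_{L^2}^2\bigr) + \|\partial_z \alpha^{(j)}\|_{L^2}^2,
\]
so $\alpha^{(j)}$ is bounded in $H^1(\Omega)$ and its horizontal gradient is $O(\delta_j)$ in $L^2$.

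\medskip

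First I would treat the damage variable: by Rellich–Kondrachov plus the $L^\infty$ bound $0 \le \alpha^{(j)} \le 1$, a subsequence converges weakly in $H^1(\Omega)$, strongly in $L^2(\Omega)$, and a.e.~to some $\widehat\alpha$ with $0\le\widehat\alpha\le 1$ a.e. Since $\partial_x \alpha^{(j)}\to 0$ and $\partial_y\alpha^{(j)}\to 0$ strongly in $L^2(\Omega)$, uniqueness of weak limits of derivatives forces $\partial_x\widehat\alpha = \partial_y\widehat\alpha = 0$, so $\widehat\alpha$ depends only on $z$. In parallel, from the $L^2$ bounds in (a) above I would extract $\widehat\varepsilon_{11},\widehat\varepsilon_{22},\widehat\varepsilon_{33}\in L^2(\Omega)$ such that $\partial_x u_1^{(j)}\rightharpoonup \widehat\varepsilon_{11}$, $\partial_y u_2^{(j)}\rightharpoonup \widehat\varepsilon_{22}$, $\partial_z u_3^{(j)}\rightharpoonup \widehat\varepsilon_{33}$ in $L^2(\Omega)$ (note that no bound is expected nor needed for $u_1^{(j)}$ or $u_2^{(j)}$ themselves).

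\medskip

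Next I would set up the $H^1(0,1)$‑compactness for the horizontal average $\overline u_3^{(j)}$. Differentiating under the integral sign gives
\[
\bigl(\overline u_3^{(j)}\bigr)'(z) = \fint_{x^2+y^2<1} \partial_z u_3^{(j)}(x,y,z)\, d\mathcal{H}^2(x,y),
\]
and Jensen's inequality bounds this in $L^2(0,1)$ by the $L^2(\Omega)$‑norm of $\partial_z u_3^{(j)}$. The admissibility condition $u_3^{(j)}(\cdot,0)=0$ gives $\overline u_3^{(j)}(0)=0$, so Poincaré's inequality promotes the derivative bound to an $H^1(0,1)$ bound. Rellich in one dimension then yields (along a further subsequence) weak $H^1$ and strong $L^2$ convergence to some $\widehat u_3\in H^1(0,1)$. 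To identify it, for any $\varphi\in C_c^\infty(0,1)$ I would compute
\[
\int_0^1 \bigl(\overline u_3^{(j)}\bigr)'(z)\,\varphi(z)\, dz = \frac{1}{\pi}\int_\Omega \partial_z u_3^{(j)}(x,y,z)\,\varphi(z)\, d\vec x,
\]
and pass to the limit using $\partial_z u_3^{(j)}\rightharpoonup \widehat\varepsilon_{33}$ in $L^2(\Omega)$, obtaining $\widehat u_3'(z) = \fint \widehat\varepsilon_{33}(\cdot,\cdot,z) = \varepsilon_{33}(z)$; combined with $\widehat u_3(0)=0$ (inherited via the compact embedding $H^1(0,1)\hookrightarrow C^0([0,1])$), this yields~\eqref{la0}.

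\medskip

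The final membership claim is then immediate: for any $u_1,u_2\in H^1(\Omega)$, the vector $\vec u = (u_1,u_2,\widehat u_3)$ lies in $H^1(\Omega;\R^3)$ because $\widehat u_3$, viewed as a $z$‑only function, is in $H^1(\Omega)$; the trace conditions $\widehat u_3(0)=0$ and $\widehat u_3(1)=-\varepsilon_z$ pass to the limit under weak $H^1(0,1)$ convergence; and both $\widehat u_3$ and $\widehat\alpha$ depend only on $z$, so $(\vec u,\widehat\alpha)\in\mathcal S$. I expect the only genuinely delicate point to be the identification step in the above display: one must be careful that the $\fint$ operator commutes cleanly with the weak $L^2$ limit and that the trace $\widehat u_3(0) = 0$ is really preserved; everything else is a direct application of standard weak compactness and Rellich–Kondrachov.
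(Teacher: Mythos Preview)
Your proposal is correct and follows essentially the same approach as the paper: both extract weak $L^2$ limits of the diagonal strains from the bound $a_\eta\ge\eta$, use Rellich--Kondrachov together with the $\delta_j^{-2}$ prefactor to get $\widehat\alpha\in H^1$ depending only on $z$, and identify $\widehat u_3'=\varepsilon_{33}$ via the test-function computation you wrote down. The only organizational difference is that you first obtain an $H^1(0,1)$ bound on $\overline u_3^{(j)}$ (via Jensen and Poincar\'e from $\overline u_3^{(j)}(0)=0$) and then identify its weak limit, whereas the paper first defines the candidate $\widehat u_3$ by formula~\eqref{la0} and then verifies that $\overline u_3^{(j)}$ converges to it; the mathematical content is the same.
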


\begin{proof}
Firstly, we establish the weak convergence of the derivatives of $u^{(j)}$.
Given that
\begin{eqnarray} \label{bound_Ej}
E_j \left[ \left(\boldsymbol{u}^{(j)}, \alpha^{(j)}\right) \right] 
&\leq& 
M \quad \text{for some } M > 0,
\end{eqnarray}
we infer from~\eqref{eq:EjInExtense}that
\[
\frac{1}{\pi} \int_{\Omega} a_{\eta} \left(\alpha^{(j)}(\boldsymbol{x})\right)
\left[
\mu \left( \frac{\partial u_1^{(j)}}{\partial x} \right)^2
+ \mu \left( \frac{\partial u_2^{(j)}}{\partial y} \right)^2
+ \mu \left( \frac{\partial u_3^{(j)}}{\partial z} \right)^2
\right] d\boldsymbol{x}
\leq M.
\]
From the uniform lower bound on $a_\eta$
\[
a_{\eta} \left(\alpha^{(j)}(\boldsymbol{x})\right) 
\geq \eta > 0,
\]
it follows that
\[
\frac{\eta}{\pi}\mu
\left[
\int_{\Omega} \left( \frac{\partial u_1^{(j)}}{\partial x} \right)^2 d\boldsymbol{x}
+ \int_{\Omega} \left( \frac{\partial u_2^{(j)}}{\partial y} \right)^2 d\boldsymbol{x}
+ \int_{\Omega} \left( \frac{\partial u_3^{(j)}}{\partial z} \right)^2 d\boldsymbol{x}
\right] \leq M,
\]
which in turn implies the uniform bounds
\begin{eqnarray*}
\left\| \frac{\partial u_1^{(j)}}{\partial x} \right\|^2_{L^2(\Omega)}
\;+\;\left\| \frac{\partial u_2^{(j)}}{\partial y} \right\|^2_{L^2(\Omega)}
\;+\;\left\| \frac{\partial u_3^{(j)}}{\partial z} \right\|^2_{L^2(\Omega)}
&\leq& 
\frac{M \pi}{\eta \mu}.
\end{eqnarray*}

It follows from the Banach-Alaoglu theorem, that 
there exists functions 
$\widehat{\varepsilon}_{11}, \widehat{\varepsilon}_{22}, \widehat{\varepsilon}_{33} \in L^2(\Omega)$,
such that for a subsequence (not relabelled)
\[
\frac{\partial u_1^{(j)}}{\partial x} \rightharpoonup \widehat{\varepsilon}_{11}, \quad
\frac{\partial u_2^{(j)}}{\partial y} \rightharpoonup \widehat{\varepsilon}_{22}, \quad
\frac{\partial u_3^{(j)}}{\partial z} \rightharpoonup \widehat{\varepsilon}_{33}.
\]

We next show that \( \nabla \alpha^{(j)} \rightharpoonup \nabla \widehat\alpha \), 
and \( \alpha^{(j)} \to \widehat\alpha \) a.e.
Indeed, we see from the uniform bound~\eqref{bound_Ej} that
\begin{align}
    \label{eq:boundsGradAlpha}
\frac{w_1}{2} \left( \frac{l}{L} \right)^2
\left[
\int_{\Omega} \delta_j^{-2} \left( \frac{\partial \alpha^{(j)}}{\partial x} \right)^2
+ \delta_j^{-2} \left( \frac{\partial \alpha^{(j)}}{\partial y} \right)^2
+ \left( \frac{\partial \alpha^{(j)}}{\partial z} \right)^2
d\boldsymbol{x}\right]
\leq M,
\end{align}
and as $\delta_j < 1$, it follows that
\[
\frac{w_1}{2} \left( \frac{l}{L} \right)^2 \|\nabla \alpha^j \|_{L^2(\Omega)}^2 
 \leq M,
\]
so that \( \|\nabla \alpha^{(j)} \|_{L^2(\Omega)} \) is uniformly bounded.
In addition, since $({\bf u}^j, \alpha_j) \in \mathcal{A}$,
$0 \leq \alpha^{(j)} (\boldsymbol{x}) \leq 1$ a.e. in $\Omega$,
so that $\alpha^{(j)}$ is uniformly bounded in $W^{1,2}(\Omega)$.
Invoking again the Banach-Alaoglu Theorem, and the Rellich-Kondrachov Theorem, 
we can extract a subsequence such that  
$\alpha^{(j)} \rightharpoonup \widehat\alpha$ in $W^{1,2}(\Omega)$ and 
$\alpha^{(j)}\to \widehat\alpha$ in $L^2(\Omega)$ and a.e. in $\Omega$.
\medskip

Secondly, we prove that the function $\widehat{u}_3 : (0,1) \to \mathbb{R}$ 
satisfies the boundary conditions~\eqref{bc's} on the vertical displacement, 
and that it is a~$H^1$ function. 
To this end, we note that 
\begin{eqnarray*}
\widehat{u}_3 (0) 
&=& \int_0^0 \varepsilon_{33}(s) ds = 0, \hspace{0.3cm}\text{and,} 
\\
\widehat{u}_3 (1) 
&=& \int_0^1 \varepsilon_{33}(s) ds = \int_0^1 \left[ \fint_{x^2 +y^2 <1} 
\widehat{\varepsilon}_{33} (x,y,s) d\mathcal{H}^2 (x,y) \right] ds
= \frac{1}{\pi}\int_{\Omega}\widehat{\varepsilon}_{33}(\boldsymbol{x})d\boldsymbol{x}
\\
&=& 
\lim_{j\to \infty}\frac{1}{\pi}\int_{\Omega}\frac{\partial u_3^{(j)}}{\partial z}dz
= \frac{1}{\pi}
\lim_{j\to \infty} \fint_{x^2 +y^2 <1}\left(u_3^{(j)}(x,y,1)-u_3^{(j)}(x,y,0)\right)
d\mathcal{H}^2 (x,y)
\\
&=&  -{\varepsilon}_z.
\end{eqnarray*}
\medskip

Thirdly, Jensen's inequality, we infer
\begin{eqnarray*}
\varepsilon_{33}^2 (s) 
&=& 
\left[ \fint_{x^2+y^2<1} \widehat{\varepsilon}_{33} (x,y,s) d\mathcal{H}^2(x,y) \right]^2
\;\leq\; 
\fint_{x^2+y^2<1} \widehat{\varepsilon}_{33}\,^2 (x,y,s) d\mathcal{H}^2(x,y), 
\end{eqnarray*}
so that
\begin{eqnarray*}
\int_0^z \varepsilon_{33}^2 (s) ds 
&\leq& 
\int_0^z \left( \fint_{x^2+y^2<1} \widehat{\varepsilon}_{33}^2 (x,y,s) 
d\mathcal{H}^2(x,y) \right) ds
\leq \fint_\Omega \widehat{\varepsilon}_{33}^2 d\boldsymbol{x}.
\end{eqnarray*}
Using the Cauchy-Schwarz inequality, and considering that $z\in (0,1)$, we see that
\begin{eqnarray*}
\left(\int_{0}^z\varepsilon_{33}(s)ds\right)^2
&\leq& 
z\int_{0}^z \varepsilon_{33}^2(s)ds
\;\leq\; 
\int_{0}^z \varepsilon_{33}^2(s)ds,
\end{eqnarray*}
and thus
\begin{eqnarray*}
\widehat{u}_3(z) &=&
\int_{0}^z\varepsilon_{33}(s)ds 
\;\leq\; \left ( \int_{0}^z \varepsilon_{33}^2(s)ds \right ) ^{1/2}
\;\leq\; \left(\fint_{\Omega} \widehat{\varepsilon}\,^2_{33} ds\right)^{1/2}
\end{eqnarray*}
so that $\widehat{u}_3 \in L^{\infty}(0,1)$ and $\varepsilon_{33}\in L^{2}(0,1)$,
since $\widehat{\varepsilon}_{33} \in L^2(\Omega)$.
In addition,  given $\varphi \in C_c^\infty(0,1)$, we have
\begin{eqnarray*}
\int_0^1 \widehat{u}_3(z) \varphi'(z) \, dz 
&=& 
\int_0^1 \Big( \int_0^z \varepsilon_{33}(s) \, ds \Big) \, \varphi'(z) \, dz
\;=\;
\int_0^1 \int_0^1 \chi_{s<z} \varepsilon_{33}(s) \varphi'(z) \, ds \, dz
\\
&=& \int_0^1 \varepsilon_{33}(s) \Big( \int_s^1 \varphi'(z) \, dz  \Big)\, ds
\;=\; - \int_0^1 \varepsilon_{33}(s) \varphi(s) \, ds.
\end{eqnarray*}
which proves that $\widehat{u}_3 \in H^1(0,1)$  and that ${\widehat{u}_3}' = \varepsilon_{33}$.
\medskip

In order to prove that $\widehat\alpha$
is independent of $x_1$, $x_2$, we note that  
\begin{align} \label{eq:uniaxial_alpha} 
\int_{\Omega} \left(\frac{\partial\widehat\alpha}{\partial x}\right)^2d\boldsymbol{x} 
\leq 
\liminf_{j\to\infty} \int_{\Omega} \left(\frac{\partial\alpha^{(j)}}{\partial x}\right)^2 d\boldsymbol{x} 
\leq 
\liminf_{j\to\infty} \frac{2ML^2\delta_j^2}{w_1\ell^2 }
=0,
\end{align}
by~\eqref{eq:boundsGradAlpha} and the lower semicontinuity of the $L^2$ norm.
Hence $\displaystyle{\frac{\partial\widehat\alpha}{\partial x}=0}$ a.e. Analogously, $\displaystyle{\frac{\partial\widehat\alpha}{\partial y}=0}$ a.e.
\medskip

Next, let $u_1$ and $u_2$ be any pair of functions in $H^1(\Omega)$, 
and define $\vec u\in H^1(\Omega; \R^3) = (u_1, u_2, \widehat{u}_3)$.
From~\eqref{eq:uniaxial_alpha}, it follows that 
$(\vec u, \widehat\alpha) \in \mathcal S$, as claimed.
\medskip

Finally, we prove that the sequence $\overline {u}_3^{(j)}$, 
defined in~\eqref{eq:horizontal_averages_u3}, 
converges weakly to $\widehat{u}_3$ in $L^2(\Omega)$.
To this end, let $\varphi \in L^2({\Omega})$ and consider
\begin{eqnarray*}
J &:=&
\int_{\Omega} \overline{u}_3^{(j)}(x, y, z) \varphi(x, y, z) \, d\mathcal{H}^2(x, y)
\\ 
&=& 
\int_0^1 \int_{x^2+y^2<1} \overline{u}_3^{(j)}(x, y, z) \varphi(x, y, z) \, d\mathcal{H}^2(x, y) 
\\
&=& \int_0^1 \int_{x^2+y^2<1}\left[ \fint_{X^2+Y^2<1 }{u}_3^{(j)}(X,Y,z)
d\mathcal{H}^2(X,Y)\right]\varphi(x, y, z) \, d\mathcal{H}^2(x, y)dz
\\
&=&
\int_0^1 \int_{x^2+y^2<1}\fint_{X^2+Y^2<1 }
\left( \int_{0}^z \partial_3 u_{3}^{(j)}(X,Y,s) \,ds \right)
d\mathcal{H}^2(X,Y) \varphi(x, y, z) \, d\mathcal{H}^2(x, y)dz.
\end{eqnarray*}
Set $\displaystyle{\Phi}(X,Y,s):=\int_{s}^1 \int_{X^2+Y^2<1} \varphi(x,y,z)
d\mathcal{H}(x,y)dz$.
It follows that
\begin{eqnarray*}
J &=&
\int_0^1\fint_{X^2+Y^2<1} \partial_3 u^{(j)}_3 (X,Y,s) \Phi(X,Y,s) \,d\mathcal{H}^2(X,Y) \,ds
\\ 
&\overset{j \to \infty}{\to}&
\int_0^1 \fint_{X^2+Y^2<1 }\widehat{\varepsilon}_{33}(X,Y,s)\Phi(X,Y,s)
d\mathcal{H}^2(X,Y) \,ds
\\
&=&
\int_0^1\int_{x^2+y^2<1}\fint_{X^2+Y^2<1}\int_0^z \widehat{\varepsilon}_{33}(X,Y,s)
d\mathcal{H}^2(X,Y)\,ds \; \varphi(x,y,z)d\mathcal{H}^2(x,y)dz
\\
&=&
\int_0^1\int_{x^2+y^2<1}\int_0^z \fint_{X^2+Y^2<1}\widehat{\varepsilon}_{33}(X,Y,s)
d\mathcal{H}^2(X,Y)\,ds \; \varphi(x,y,z)d\mathcal{H}^2(x,y)dz
\\
&=&
\int_0^1\int_{x^2+y^2<1} \left(\int_{0}^z\varepsilon_{33}(s)\,ds \right) \; 
\varphi(x,y,z)d\mathcal{H}^2(x,y)dz
\\
&=&
\int_{\Omega} \widehat{u}_3(z)\varphi(x,y,z) \,d(x,y,z).
\end{eqnarray*}

On the other hand, we observe that for any $\varphi \in C_c^\infty\big([0,1]\big)$
\begin{eqnarray*}
-\int_0^1 \overline{u}_3^{(j)}(z) \partial_{z}\varphi(z) d\vec x 
&=&
-\int_0^1 \fint_{x^2 +y^2 <1} u_3^{(j)}(x,y,z) \partial_{z} \varphi(z) 
\,d\mathcal H^2(x,y) \,dz 
\\
&=&
\frac{-1}{\pi} \int_\Omega u_3(j)(x,y,z) \partial_{z} \varphi(z) \,d(x,y,z)
\\
&=&  
\frac{1}{\pi}   
\int_\Omega
\partial_{z} u_3^{(j)} (x,y,z) \varphi(z) \,d(x,y,z)
\\ 
&\xrightarrow{j\to\infty}&
\frac{1}{\pi} \int_\Omega \widehat{\varepsilon}_{33}(x,y,z)\varphi (z) d(x,y,z)
\;=\; \int_0^1 \varepsilon_{33}(z) \varphi(z) dz.
\end{eqnarray*}
In other words, for each $j \in \N$, the function $\overline{u}_3^{(j)}$ has a weak derivative, 
and $\partial_{z} \overline{u}_3$ consists of the horizontal averages of $\partial_{z} u_3^{(j)}$.
The above calculation also shows that $\widehat{u}_3 \in H^1(0,1)$ 
with weak derivative $\varepsilon_{33}(z)$.
In addition, the sequence $\big( \overline{u}_3^{(j)}\big)_{j\in \N}$ converges to $\widehat{u}_3$ not only weakly in $L^2(0,1)$ but also weakly in $H^1(0,1)$. Consequently, 
invoking the Rellich-Kondrachov Theorem, a subsequence converges to $\widehat{u}_3$ strongly in $L^2(0,1)$.
\end{proof}


\section{Lower and Upper Energy Bounds} \label{Sec5}
\subsection{Energy lower bound}

In this section, we derive a lower bound on the $E_j$'s in terms of
the limiting energy $E_\infty$. This result is a key step in the proof of 
the lim-inf condition in the $\Gamma$-convergence statement of Theorem~\ref{Teo2}.
It is also helpful in obtaining the weak compactness of a sequence of minimizers
in the proof of Theorem~\ref{Main}.

\begin{proposition}\label{Prop1}
Let \( \{( \boldsymbol{u}^{(j)}, \alpha^{(j)})\}_{j \in \mathbb{N}} \) 
be a sequence in \( \mathcal{A} \). 
Assume that the sequence of slice-averages
$\Big ( \overline{u}_3^{(j)} \Big )_{j\in\N}$ defined by~\eqref{eq:horizontal_averages_u3}
converges weakly in $L^2(\Omega)$ to some $\widehat{u}_3\in H^1(\Omega)$,
which satisfies $\widehat{u}_3(x,y,0)=0$ for a.e.~$x$ and $y$. 
Furthermore, assume that
\begin{gather*}
\frac{\partial u_1^{(j)}}{\partial x} \rightharpoonup \widehat{\varepsilon}_{11}, \quad
\frac{\partial u_2^{(j)}}{\partial y} \rightharpoonup \widehat{\varepsilon}_{22}, \quad
\frac{\partial u_3^{(j)}}{\partial z} \rightharpoonup \widehat{\varepsilon}_{33},
\quad 
\nabla \alpha^{(j)} \rightharpoonup \nabla \widehat \alpha, \quad
\alpha^{(j)} \to \widehat \alpha \text{ pointwise a.e.},
\end{gather*}
for some $\widehat{\eps}_{11}$, $\widehat{\eps}_{22}$, $\widehat{\eps}_{33}$ in $L^2(\Omega)$,
and for some $\widehat \alpha\in H^1(\Omega)$ depending on $z$ only.
Then $\widehat u_3$ is independent of $x$ and $y$, and 
\begin{multline}
\label{eq:conclusion_liminf}
\lf    E_{j} [\boldsymbol{u}^{(j)}, \alpha^{(j)}]
\geq
\int_{0}^1  a_{\eta}(\widehat{\alpha}(z)) \frac{1}{2}E(\widehat{u}_3'(z))^2
+w(\widehat{\alpha}(z))+\frac{1}{2}w_1\left(\frac{l}{L}\right)^2 |\widehat{\alpha}'(z)|^2dz
\\
+
\frac{1}{\pi} \int_{\Omega}a_{\eta}(\widehat{\alpha}(\boldsymbol{x}))
\left[ \frac{\mu}{2} (\widehat{\varepsilon}_{11} - \widehat{\varepsilon}_{22})^2
+ 2(\lambda+\mu)\left(\frac{\widehat{\varepsilon}_{11} + \widehat{\varepsilon}_{22}}{2}+\nu \widehat{\varepsilon}_{33}  \right)^2\right]d\boldsymbol{x}
\\
+\frac{1}{2}\int_{0}^1  a_{\eta}(\widehat{\alpha}(z))E\left(\intp\left( \vareg_{33}(x,y,z)
-\varepsilon_{33}(z) \right)^2d\mathcal{H}^2(x,y)\right)dz
\\
+\lf
\Bigg \{     
\frac{\mu}{\pi} 
\Bigg(
 \int_{\Omega}
\left( \sqrt{a_{\eta} (\widehat{\alpha} (\boldsymbol{x}))} \widehat{\varepsilon}_{11}
- \sqrt{a_{\eta} (\alpha^{(j)} (\boldsymbol{x}))} \frac{\partial u_1^{(j)}}{\partial x} \right)^2d\boldsymbol{x}\\
+\left( \sqrt{a_{\eta} (\widehat{\alpha} (\boldsymbol{x}))} \widehat{\varepsilon}_{22}
- \sqrt{a_{\eta} (\alpha^{(j)} (\boldsymbol{x}))} \frac{\partial u_2^{(j)}}{\partial y} \right)^2d\boldsymbol{x}\\
+
\left( \sqrt{a_{\eta} (\widehat{\alpha} (\boldsymbol{x}))} \widehat{\varepsilon}_{33}
- \sqrt{a_{\eta} (\alpha^{(j)} (\boldsymbol{x}))} \frac{\partial u_3^{(j)}}{\partial z} \right)^2d\boldsymbol{x}
\Bigg)
\\
+\frac{\eta \mu}{2\pi} \int_{\Omega}
\left[ \left( \frac{\partial u_1^{(j)}}{\partial y} + \frac{\partial u_2^{(j)}}{\partial x} \right)^2
+ \left( \delta_j \frac{\partial u_1^{(j)}}{\partial z} 
+ \delta_j^{-1} \frac{\partial u_3^{(j)}}{\partial x} \right)^2 \right.
\left. + \left( \delta_j \frac{\partial u_2^{(j)}}{\partial z} 
+ \delta_j^{-1} \frac{\partial u_3^{(j)}}{\partial y} \right)^2 \right] 
d\boldsymbol{x}
\\
+\frac{1}{2} w_1 \left( \frac{l}{L} \right)^2
\int_{\Omega}
\left[ \delta_j^{-2} \left( \frac{\partial \alpha^{(j)}}{\partial x} \right)^2 + \delta_j^{-2} \left( \frac{\partial \alpha^{(j)}}{\partial y} \right)^2 
+ \left( \frac{\partial \alpha^{(j)}}{\partial z}  
- \frac{\partial {\widehat \alpha}}{\partial z}\right)^2 \right]
d\boldsymbol{x}
\Bigg \}
\end{multline}
where $\varepsilon_{33}(z)$ represents the slice-average of $\widehat \eps_{33}$, 
as defined in~\eqref{la0}.
\end{proposition}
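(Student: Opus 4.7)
First, I exploit that each $\overline{u}_3^{(j)}$ is $(x,y)$-independent, hence so is its weak $L^2(\Omega)$-limit $\widehat{u}_3$; testing against $\varphi \in C_c^\infty(0,1)$ with integration by parts and $\partial_z u_3^{(j)} \rightharpoonup \widehat{\varepsilon}_{33}$ (mirroring the closing argument in the proof of Theorem~\ref{Teo1}) yields $\widehat{u}_3 \in H^1(0,1)$ with $\widehat{u}_3'(z) = \varepsilon_{33}(z) := \fint_{x^2+y^2<1} \widehat{\varepsilon}_{33}\, d\mathcal{H}^2$. Next, since $\alpha^{(j)} \to \widehat{\alpha}$ a.e.\ with $0 \leq \alpha^{(j)} \leq 1$ and $a_\eta$, $w$ are continuous, the Dominated Convergence Theorem gives $\sqrt{a_\eta(\alpha^{(j)})} \to \sqrt{a_\eta(\widehat{\alpha})}$ and $w(\alpha^{(j)}) \to w(\widehat{\alpha})$ strongly in $L^p(\Omega)$ for every $p < \infty$. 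Combined with the assumed weak-$L^2$ convergences of the diagonal strains, this yields
\begin{equation*}
\sqrt{a_\eta(\alpha^{(j)})}\, \partial_i u_i^{(j)} \;\rightharpoonup\; \sqrt{a_\eta(\widehat{\alpha})}\, \widehat{\varepsilon}_{ii} \quad (i = 1, 2, 3),
\end{equation*}
weakly in $L^2(\Omega)$, and similarly for the trace $\sum_i \partial_i u_i^{(j)}$ and for $\partial_z \alpha^{(j)} \rightharpoonup \widehat{\alpha}'$.

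\textbf{Clean-limit extraction.} The workhorse is the identity: for any $X_j \rightharpoonup X$ weakly in $L^2$, expanding $X_j^2 = X^2 + 2X(X_j - X) + (X_j - X)^2$ and noting that the cross term vanishes yields
\begin{equation*}
\liminf_j \int X_j^2 = \int X^2 + \liminf_j \int (X_j - X)^2.
\end{equation*}
Applied term-by-term with $X_j = \sqrt{a_\eta(\alpha^{(j)})}\, \partial_i u_i^{(j)}$, $X = \sqrt{a_\eta(\widehat{\alpha})}\, \widehat{\varepsilon}_{ii}$ for $i=1,2,3$, this separates the three $\mu$-diagonal contributions of $E_j$ into the pure-limit parts $\int a_\eta(\widehat{\alpha})\, \mu\, \widehat{\varepsilon}_{ii}^2$ plus the three elastic residuals in the last $\liminf$-block of \eqref{eq:conclusion_liminf}; the same trick on $\partial_z \alpha^{(j)} \rightharpoonup \widehat{\alpha}'$ isolates the $(\widehat{\alpha}')^2$ piece and the corresponding residual. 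For the $\lambda$-trace term I only retain the weak lower semicontinuity bound $\liminf \int a_\eta(\alpha^{(j)})\, (\mathrm{tr}\,\varepsilon^{(j)})^2 \geq \int a_\eta(\widehat{\alpha})\, (\sum_i \widehat{\varepsilon}_{ii})^2$, discarding the positive $\lambda$-fluctuation that does not appear on the right-hand side. The shear and transverse damage-gradient terms, for which no usable weak limit is available, are kept inside the residual block (using only $a_\eta(\alpha^{(j)}) \geq \eta$ to pull out the prefactor $\tfrac{\eta\mu}{2}$ in front of the shear sum), and $w(\alpha^{(j)}) \to w(\widehat{\alpha})$ passes in $L^1$ by DCT. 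Adding the converging clean parts to the $\liminf$ of the residual block (via $\liminf(T_j + R_j) \geq \lim T_j + \liminf R_j$ when $T_j$ converges) produces
\begin{equation*}
\liminf_j E_j \;\geq\; \tfrac{1}{\pi} \int_\Omega a_\eta(\widehat{\alpha})\Bigl[\mu \sum_i \widehat{\varepsilon}_{ii}^2 + \tfrac{\lambda}{2}\bigl(\textstyle\sum_i \widehat{\varepsilon}_{ii}\bigr)^2\Bigr] d\vec{x} \;+\; \int_0^1 w(\widehat{\alpha}) + \tfrac{w_1 \ell^2}{2L^2}(\widehat{\alpha}')^2\, dz \;+\; \liminf_j \{\text{residuals}\}.
\end{equation*}

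\textbf{Algebraic reorganization and slice decomposition.} The pure-limit elastic integrand is recast via the uniaxial-elasticity identity
\begin{equation*}
\mu \sum_i \widehat{\varepsilon}_{ii}^2 + \tfrac{\lambda}{2}\Bigl(\textstyle\sum_i \widehat{\varepsilon}_{ii}\Bigr)^2 = \tfrac{E}{2}\widehat{\varepsilon}_{33}^2 + 2(\lambda + \mu)\Bigl(\tfrac{\widehat{\varepsilon}_{11} + \widehat{\varepsilon}_{22}}{2} + \nu \widehat{\varepsilon}_{33}\Bigr)^2 + \tfrac{\mu}{2}(\widehat{\varepsilon}_{11} - \widehat{\varepsilon}_{22})^2,
\end{equation*}
obtained by the change of variables $s = \tfrac{1}{2}(\widehat{\varepsilon}_{11} + \widehat{\varepsilon}_{22}) + \nu \widehat{\varepsilon}_{33}$, $d = \tfrac{1}{2}(\widehat{\varepsilon}_{11} - \widehat{\varepsilon}_{22})$: the would-be cross term in $s\widehat{\varepsilon}_{33}$ has coefficient $\lambda - 2\nu(\lambda + \mu) = 0$ by~\eqref{Young}. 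Since $\widehat{\alpha}$ depends only on $z$, Fubini together with the slice-orthogonal identity $\fint \widehat{\varepsilon}_{33}^2\, d\mathcal{H}^2 = \varepsilon_{33}(z)^2 + \fint (\widehat{\varepsilon}_{33} - \varepsilon_{33})^2\, d\mathcal{H}^2$ splits $\tfrac{1}{\pi}\int_\Omega a_\eta(\widehat{\alpha})\tfrac{E}{2}\widehat{\varepsilon}_{33}^2\, d\vec{x}$ into the one-dimensional axial energy $\int_0^1 a_\eta(\widehat{\alpha})\, \tfrac{E}{2}(\widehat{u}_3')^2\, dz$ (using $\widehat{u}_3' = \varepsilon_{33}$ from the Setup) and the slice-fluctuation integral of \eqref{eq:conclusion_liminf}; combined with the damage density and damage $z$-gradient clean contributions, the full right-hand side of \eqref{eq:conclusion_liminf} emerges. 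The single non-routine ingredient is the weak-$L^2$ convergence of the products $\sqrt{a_\eta(\alpha^{(j)})}\partial_i u_i^{(j)}$ in the Setup, which rests on the strong $L^p$-convergence of the nonlinear degradation composition so that the cross term in the clean-plus-residual identity vanishes; everything else reduces to standard weak lower semicontinuity, the algebraic identity above, and subadditivity of $\liminf$ over nonnegative summands.
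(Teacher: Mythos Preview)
Your proposal is correct and follows essentially the same approach as the paper's own proof: the key device of writing each quadratic term as the square of $\sqrt{a_\eta(\alpha^{(j)})}\,\partial_i u_i^{(j)}$, establishing its weak $L^2$-convergence via DCT on the bounded factor, and then invoking the identity $\int X_j^2 = \int X^2 + \int (X_j-X)^2 + o(1)$ to split off the clean limit plus residual is exactly what the paper does, as are the algebraic rewriting of the diagonal elastic density, the slice decomposition $\fint \widehat\varepsilon_{33}^2 = \varepsilon_{33}^2 + \fint(\widehat\varepsilon_{33}-\varepsilon_{33})^2$, the identification $\widehat u_3'=\varepsilon_{33}$, and the handling of the trace, shear, and damage-gradient terms. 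Your organization into ``Setup / Clean-limit extraction / Algebraic reorganization'' is a tidy repackaging, but there is no substantive difference in method.
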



\begin{proof}
Assume that $\{( \boldsymbol{u}^{(j)}, \alpha^{(j)})\}_{j \in \mathbb{N}}$
satisfy the hypotheses of the statement and 
recall that the energy functional $E_j$ is given by
\begin{eqnarray} \label{la2}
 E_j[\vec u^{(j)}, \alpha^{(j)}] = \frac{1}{\pi} \int_{\Omega} a_{\eta}(\alpha^{(j)}(\boldsymbol{x})) \left[ \mu \left(\frac{\partial u_1^{(j)}}{\partial x}\right)^2 + \mu \left(\frac{\partial u_2^{(j)}}{\partial y}\right)^2 + \mu \left(\frac{\partial u_3^{(j)}}{\partial z}\right)^2 \right. \nonumber  \\
\left. + \frac{\lambda}{2} \left(\frac{\partial u_1^{(j)}}{\partial x} + \frac{\partial u_2^{(j)}}{\partial y} + \frac{\partial u_3^{(j)}}{\partial z} \right)^2 + \frac{\mu}{2} \left(\frac{\partial u_1^{(j)}}{\partial y} + \frac{\partial u_2^{(j)}}{\partial x} \right)^2 \right.\nonumber\\
\left. + \frac{\mu}{2} \left(\delta_j \frac{\partial u_1^{(j)}}{\partial z} + \delta_{j}^{-1}\frac{\partial u_3^{(j)}}{\partial x} \right)^2 + \frac{\mu}{2} \left(\delta_j \frac{\partial u_1^{(j)}}{\partial z} + \delta_{j}^{-1} \frac{\partial u_3^{(j)}}{\partial x} \right)^2 \right]\\
+ w(\alpha^{(j)}(\boldsymbol{x})) + \frac{1}{2} w_1 \left(\frac{l}{L}\right)^2 \left[ \delta_{j}^{-1} \left(\frac{\partial \alpha^{(j)}}{\partial x}\right)^2 + \delta_{j}^{-1}\left(\frac{\partial \alpha^{(j)}}{\partial y}\right)^2 + \left(\frac{\partial \alpha^{(j)}}{\partial z}\right)^2 \right] \, d\boldsymbol{x}\nonumber
\end{eqnarray}

In view of the equality
\begin{align}
\int_{\Omega} |f^{(j)}|^2 = \int_{\Omega} | f + (f^{(j)} - f) |^2\nonumber 
= \int_{\Omega} f^2 + 2 \int_{\Omega} (f^{(j)} - f) f
+ \int_{\Omega} |f^{(j)} - f|^2,
\end{align}
we see that if $\displaystyle{f^{(j)}}\rightharpoonup f\in L^2(\Omega)$, then
\begin{eqnarray} \label{la6}
    \int_{\Omega} |f^{(j)}|^2 = \int_\Omega f^2 + \int_{\Omega} (f^{(j)} -f)^2 + o(1)
    \geq \int_\Omega f^2 +o(1).
\end{eqnarray}
where $o(1)$ denotes a quantity that converges to zero as $j\to \infty$.
\medskip

We set 
\begin{eqnarray} \label{def_fx}
f_{x}^{(j)}(\vec{x}) &:=& \sqrt{a_{\eta} (\alpha^{(j)} (\vec{x}))} \, 
\frac{\partial u_1^{(j)}}{\partial x} (\vec{x})
\\
f_x(\vec{x}) &=& \sqrt{a_{\eta} (\widehat{\alpha} (\vec{x}))} \, 
\widehat{\varepsilon}_{11}(\vec{x}).
\end{eqnarray}
Since $a_\eta$ is continuous, $\alpha^{(j)} \to \widehat{\alpha}$ a.e.~in $\Omega$
and $0 \leq \alpha^{(j)} \leq 1$, it follows from the 
Lebesgue dominated convergence Theorem that for any test function $\varphi \in L^2(\Omega)$,
$\sqrt{a_{\eta} (\alpha^{(j)}) } \, \varphi \to 
\sqrt{a_{\eta} (\widehat{\alpha}) }\, \varphi$ strongly in $L^2$,
and consequently
\begin{eqnarray*}
   \lim_{j \to \infty}  \int_\Omega f^{(j)}_x(\vec{x}) \varphi(\vec{x}) \,d\vec{x}
    &=& \int_\Omega f_x(\vec{x}) \varphi(\vec{x}) \,d\vec{x},
\end{eqnarray*}
as $\displaystyle{\frac{\partial u_1^{(j)}}{\partial x} \rightharpoonup  \widehat{\varepsilon}_{11}}$.
A similar argument applies to show that
\begin{eqnarray} \label{def_fy}
f_{y}^{(j)}(\vec{x}) \;:=\; \sqrt{a_{\eta} (\alpha^{(j)} (\vec{x}))} \, 
\frac{\partial u_2^{(j)}}{\partial y}(\vec{x})
&\rightharpoonup&
f_y(\vec{x}) \;:=\; \sqrt{a_{\eta} (\widehat{\alpha} (\vec{x}))} \, 
\widehat{\varepsilon}_{22}(\vec{x})
\\
\label{def_fz}
f_{z}^{(j)}(\vec{x}) \;:=\; \sqrt{a_{\eta} (\alpha^{(j)} (\vec{x}))} \, 
\frac{\partial u_3^{(j)}}{\partial z}(\vec{x})
&\rightharpoonup&
f_z(\vec{x}) \;:=\; \sqrt{a_{\eta} (\widehat{\alpha} (\vec{x}))} \, 
\widehat{\varepsilon}_{33}(\vec{x}),
\end{eqnarray}
weakly in $L^2(\Omega)$.
Applying~\eqref{la6} to the first term of~\eqref{la2} thus  yields  
\begin{eqnarray*}
\frac{\mu}{\pi} \int_{\Omega} a_{\eta} (\alpha^{(j)} (\boldsymbol{x})) 
\left( \frac{\partial u_1^{(j)}}{\partial x} \right)^2 d\boldsymbol{x}
&=& 
\frac{\mu}{\pi} \int_{\Omega} \left( \sqrt{a_{\eta} (\alpha^{(j)} (\boldsymbol{x}))} \frac{\partial u_1^{(j)}}{\partial x} \right)^2 d\boldsymbol{x}
\\
&=& 
\frac{\mu}{\pi} \int_{\Omega} \left( \sqrt{a_{\eta} (\widehat{\alpha} (\boldsymbol{x}))}\widehat \varepsilon_{11} \right)^2 d\boldsymbol{x}
\\
&& + \frac{\mu}{\pi} \int_{\Omega} \left( \sqrt{a_{\eta} (\widehat{\alpha} (\boldsymbol{x}))} \widehat\varepsilon_{11} - \sqrt{a_{\eta} (\alpha^{(j)} (\boldsymbol{x}))} \frac{\partial u_1^{(j)}}{\partial x} \right)^2 d\boldsymbol{x}+o(1),
\end{eqnarray*}
and analog bounds hold for $\displaystyle\frac{\partial u_2^{(j)}}{\partial y}$ and 
$\displaystyle\frac{\partial u_3^{(j)}}{\partial z}$.

Collecting terms we obtain
\begin{eqnarray*}
\lefteqn{
\frac{\mu}{\pi} \int_{\Omega} a_{\eta} (\alpha^{(j)} (\boldsymbol{x}))
\left[ \left( \frac{\partial u_1^{(j)}}{\partial x} \right)^2
+ \left( \frac{\partial u_2^{(j)}}{\partial y} \right)^2
+ \left( \frac{\partial u_3^{(j)}}{\partial z} \right)^2
\right]d\boldsymbol{x} }
\\
&=&
\frac{\mu}{\pi} \int_{\Omega} a_{\eta} (\widehat{\alpha} (\boldsymbol{x}))
\left[ (\widehat\varepsilon_{11})^2 + (\widehat\varepsilon_{22})^2 + (\widehat\varepsilon_{33})^2 \right]d\boldsymbol{x} 
\\
\label{eq:Laplacian}
&&
+ \frac{\mu}{\pi} \Bigg(    \int_{\Omega}
\left( \sqrt{a_{\eta} (\widehat{\alpha} (\boldsymbol{x}))} \widehat\varepsilon_{11}
- \sqrt{a_{\eta} (\alpha^{(j)} (\boldsymbol{x}))} \frac{\partial u_1^{(j)}}{\partial x} \right)^2 d\boldsymbol{x}+
\int_{\Omega}
\left( \sqrt{a_{\eta} (\widehat{\alpha} (\boldsymbol{x}))} \widehat\varepsilon_{22}
- \sqrt{a_{\eta} (\alpha^{(j)} (\boldsymbol{x}))} \frac{\partial u_2^{(j)}}{\partial y} \right)^2 d\boldsymbol{x}
\\&&+
\int_{\Omega}
\left( \sqrt{a_{\eta} (\widehat{\alpha} (\boldsymbol{x}))} \widehat\varepsilon_{33}
- \sqrt{a_{\eta} (\alpha^{(j)} (\boldsymbol{x}))} \frac{\partial u_3^{(j)}}{\partial z} \right)^2 d\boldsymbol{x}
\Bigg)+o(1).
\end{eqnarray*}

As for the factor of $\lambda$ in~\eqref{la2},
we apply again~\eqref{la6}, with 
$\displaystyle{f_{j} (x) = \sqrt{a_{\eta} (\alpha^{(j)} (\boldsymbol{x}))}
\left( \frac{\partial u_1^{(j)}}{\partial x}
+ \frac{\partial u_2^{(j)}}{\partial y}
+ \frac{\partial u_3^{(j)}}{\partial z} \right)}$ 
and~$f(x) =\sqrt{a_{\eta} (\widehat{\alpha} (\boldsymbol{x}))}
\left(\widehat \varepsilon_{11} + \widehat\varepsilon_{22} + \widehat\varepsilon_{33} \right)$
to show that
\begin{eqnarray*}
\label{eq:trace}
\frac{\lambda}{2\pi} \int_{\Omega} a_{\eta} (\alpha^{(j)} (\boldsymbol{x}))
\left( \frac{\partial u_1^{(j)}}{\partial x}
+ \frac{\partial u_2^{(j)}}{\partial y}
+ \frac{\partial u_3^{(j)}}{\partial z} \right)^2d\boldsymbol{x}
&\geq& 
\frac{\lambda}{2\pi} \int_{\Omega} a_{\eta} (\widehat{\alpha} (\boldsymbol{x}))
\left( \widehat\varepsilon_{11} +\widehat \varepsilon_{22} +\widehat \varepsilon_{33} \right)^2d\boldsymbol{x}+o(1).
\end{eqnarray*}

We note that
\begin{eqnarray*}
\lefteqn{
\mu
\big(\widehat{\varepsilon}_{11}\,^2 + \widehat{\varepsilon}_{22}\,^2 + \widehat{\varepsilon}_{33}\,^2\big) 
+ \frac{\lambda}{2}
\big(\widehat{\varepsilon}_{11} + \widehat{\varepsilon}_{22} + \widehat{\varepsilon}_{33}\big)^2 }
\\
&=  &
\frac{\mu}{2} \big(\widehat{\varepsilon}_{11} + \widehat{\varepsilon}_{22}\big)^2
+ 
\frac{\mu}{2} \big(\widehat{\varepsilon}_{11} - \widehat{\varepsilon}_{22}\big)^2
+
\mu \widehat{\varepsilon}_{33}\,^2
+ \frac{\lambda}{2}
\big(\widehat{\varepsilon}_{11} + \widehat{\varepsilon}_{22} + \widehat{\varepsilon}_{33}\big)^2 
\\
&=&
\left( 2(\lambda + \mu) 
\Big( \frac{\widehat{\varepsilon}_{11} + \widehat{\varepsilon}_{22}}{2} 
+ \nu \widehat{\varepsilon}_{33} \Big)^2
+ \frac{E}{2} \widehat{\varepsilon}_{33}\,^2\right) +\frac{\mu}{2}(\widehat{\varepsilon}_{11}-\widehat{\varepsilon}_{22})^2
\end{eqnarray*}
where the Poisson ratio $\nu$ and the Young modulus $E$ are defined in \eqref{Young}.

We thus can estimate the elastic energy density in~\eqref{la2} as follows,
using the fact that $a_\eta > \eta$,
\begin{eqnarray}
\lefteqn{
\frac{1}{\pi} \int_{\Omega} a_{\eta} (\alpha^{(j)} (\boldsymbol{x}))\left[
\left( \mu \left( \frac{\partial u_1^{(j)}}{\partial x} \right)^2
+ \mu \left( \frac{\partial u_2^{(j)}}{\partial y} \right)^2
+ \mu \left( \frac{\partial u_3^{(j)}}{\partial z} \right)^2 \right)
+ \frac{\lambda}{2} \left( \frac{\partial u_1^{(j)}}{\partial x}
+ \frac{\partial u_2^{(j)}}{\partial y}
+ \frac{\partial u_3^{(j)}}{\partial z} \right)^2\right. } \nonumber
\\ 
\nonumber &&
\left.+ \frac{\mu}{2} \left( \frac{\partial u_1^{(j)}}{\partial y}
+ \frac{\partial u_2^{(j)}}{\partial x} \right)^2
+ \frac{\mu}{2} \left( \delta_j \frac{\partial u_1^{(j)}}{\partial z}
+ \delta_j^{-1} \frac{\partial u_3^{(j)}}{\partial x} \right)^2
+ \frac{\mu}{2} \left( \delta_j \frac{\partial u_2^{(j)}}{\partial z}
+ \delta_j^{-1} \frac{\partial u_3^{(j)}}{\partial y} \right)^2\right]d\boldsymbol{x}
\\
\nonumber
&\geq& 
\frac{1}{\pi} \int_{\Omega} a_{\eta} (\widehat{\alpha} (\boldsymbol{x}))
\left[ \mu \widehat{\varepsilon}_{11}^2 
+ \mu \widehat{\varepsilon}_{22}^2 + \mu \widehat{\varepsilon}_{33}^2
+ \frac{\lambda}{2} (\widehat{\varepsilon}_{11} + \widehat{\varepsilon}_{22} + \widehat{\varepsilon}_{33})^2 \right] 
\,d\boldsymbol{x}
\\
\nonumber
&&+ \frac{\mu}{2\pi} \int_{\Omega} \left(a_{\eta}(\alpha^{(j)})\right)  
\left[ \left( \frac{\partial u_1^{(j)}}{\partial y} 
+ \frac{\partial u_2^{(j)}}{\partial x} \right)^2 
+ \left( \delta_j \frac{\partial u_1^{(j)}}{\partial z}
+ \delta_j^{-1} \frac{\partial u_3^{(j)}}{\partial x} \right)^2  
+ \left( \delta_j \frac{\partial u_2^{(j)}}{\partial z} 
+ \delta_j^{-1} \frac{\partial  u_3^{(j)}}{\partial y} \right)^2 \right] 
\,d\boldsymbol{x}
\\
\nonumber &&
+\; J + o(1)
\\
\nonumber &\geq&
\frac{1}{\pi} \int_{\Omega} a_{\eta} (\widehat{\alpha} (\boldsymbol{x}))
\frac{1}{2} E (\widehat{\varepsilon}_{33})^2 d\boldsymbol{x}
+ \frac{\eta}{\pi} \int_{\Omega}
\frac{\mu}{2} (\widehat{\varepsilon}_{11} - \widehat{\varepsilon}_{22})^2
+ 2(\lambda+\mu)\left(\frac{\widehat{\varepsilon}_{11} + \widehat{\varepsilon}_{22}}{2}+\nu \widehat{\varepsilon}_{33}  \right)^2d\boldsymbol{x} 
\\
\nonumber &&  
+\frac{\mu\eta }{2\pi} \int_{\Omega}
 \left( \frac{\partial u_1^{(j)}}{\partial y} + \frac{\partial u_2^{(j)}}{\partial x} \right)^2
+ \left( \delta_j \frac{\partial u_1^{(j)}}{\partial z} + \delta_j^{-1} \frac{\partial u_3^{(j)}}{\partial x} \right)^2
+ \left( \delta_j \frac{\partial u_2^{(j)}}{\partial z} + \delta_j^{-1} \frac{\partial u_3^{(j)}}{\partial y} \right)^2 d\boldsymbol{x}
\\
&&    \label{eq:Laplacian+trace}
+\;J  +  o(1),
\end{eqnarray}
where we have set
\begin{eqnarray*}
J &=& 
\frac{\mu}{\pi} \int_{\Omega}
\left( \sqrt{a_{\eta} (\widehat{\alpha} (\boldsymbol{x}))} \widehat{\varepsilon}_{11}
- \sqrt{a_{\eta} (\alpha^{(j)} (\boldsymbol{x}))} \frac{\partial u_1^{(j)}}{\partial x} \right)^2 
+
\left( \sqrt{a_{\eta} (\widehat{\alpha} (\boldsymbol{x}))} \widehat{\varepsilon}_{22}
- \sqrt{a_{\eta} (\alpha^{(j)} (\boldsymbol{x}))} \frac{\partial u_2^{(j)}}{\partial y} \right)^2 
\\
&&
+
\left( \sqrt{a_{\eta} (\widehat{\alpha} (\boldsymbol{x}))} \widehat{\varepsilon}_{33}
- \sqrt{a_{\eta} (\alpha^{(j)} (\boldsymbol{x}))} \frac{\partial u_3^{(j)}}{\partial z} \right)^2 
\,d\boldsymbol{x}.
\end{eqnarray*}
\medskip

Let us now focus on the first term in~\eqref{eq:Laplacian+trace}
\begin{eqnarray*}
\displaystyle{\frac{1}{2\pi}\int_{\Omega} a_{\eta}(\widehat{\alpha}(\boldsymbol{x}))
E\, \widehat{\varepsilon}_{33}^{\,2} \,d\boldsymbol{x}}.
\end{eqnarray*}

Recalling the definition ~\eqref{la0} of the slice averages $\eps_{33}$, and 
that $\widehat{\alpha}(\boldsymbol{x})=\widehat{\alpha}(z)$, 
we see that
\begin{eqnarray}
\label{eq:e33hatLB}
\lefteqn{
\frac{1}{2\pi}\int_{\Omega} a_{\eta}(\widehat{\alpha}(\boldsymbol{x}))
E \, \widehat{\varepsilon}_{33}^{\,2} \, d\boldsymbol{x}
\;=\;
\frac{1}{2\pi}\int_{0}^1\int_{x^2+y^2<1} a_{\eta}(\widehat{\alpha}(\boldsymbol{x}))
E\, \widehat{\varepsilon}_{33}^{\,2} \,d\mathcal{H}^2(x,y)\,dz }
\\
\nonumber &=&
\frac{1}{2}\int_{0}^1  a_{\eta}(\widehat{\alpha}(z))
E\left(\intp \vareg_{33}^{\,2}(x,y,z) \,d\mathcal{H}^2(x,y)\right)dz
\\
\nonumber &=&  
\frac{1}{2}
\int_{0}^1  a_{\eta} (\widehat{\alpha}(z)) E\left[ \intp \vareg_{33}^{\,2}(x,y,z) 
+ \varepsilon_{33}^2(z) + \varepsilon_{33}^2(z) - 2\varepsilon_{33}^2(z) 
\right. 
\\
\nonumber &&\hspace{50mm}
\left. 
+ 2\big(\varepsilon_{33}(z) - \varepsilon_{33}(z) \big)\vareg_{33}(x,y,z) 
\,d\mathcal{H}^2(x,y)\right] \,dz
\\
\nonumber &=&
\frac{1}{2}
\int_{0}^1  a_{\eta}(\widehat{\alpha}(z))
E\left[ \intp \varepsilon_{33}^{2}(z )
+ 2\varepsilon_{33}(z) (\vareg_{33}(x,y,z)-\varepsilon_{33}(z)) \right.
\\
\nonumber &&\hspace{50mm}
\left.  + \left( \vareg_{33}^{\,2}(x,y,z) - 2\varepsilon_{33}(z)\vareg _{33}(x,y,z)
+\varepsilon_{33}^2(z) \right) \,d\mathcal{H}^2(x,y)\right] \,dz
\\
\nonumber &=&
\frac{1}{2}
\int_{0}^1  a_{\eta}(\widehat{\alpha}(z))E
\left[ \intp \varepsilon_{33}^{2}(z )
+ 2\varepsilon_{33}(z) (\vareg_{33}(x,y,z)-\varepsilon_{33}(z)) 
\right.
\\
\nonumber &&\hspace{50mm} 
\left. + \left( \vareg_{33}(x,y,z)-\varepsilon_{33}(z) \right)^2 
\,d\mathcal{H}^2(x,y)\right] \,dz
\\
\nonumber &=&
\frac{1}{2}
\int_{0}^1  a_{\eta}(\widehat{\alpha}(z)) E\, \varepsilon_{33}^2(z) \,dz
+ \frac{1}{2} \int_{0}^1  a_{\eta}(\widehat{\alpha}(z))E
\left(\intp\left( \vareg_{33}(x,y,z)-\varepsilon_{33}(z) \right)^2
\, d\mathcal{H}^2(x,y)\right) \,dz,
\end{eqnarray}
where, to obtain the last equality, we infered from~\eqref{la0} that
\begin{eqnarray*}
\left(\intp\varepsilon_{33}(z) (\vareg_{33}(x,y,z)-\varepsilon_{33}(z))
\, d\mathcal{H}^2(x,y)\right) &=& 0.
\end{eqnarray*} 
\medskip

At this point, we set out to prove that 
\begin{eqnarray}\label{la7}
\varepsilon_{33}(z) &=& \frac{d \widehat{u}_3}{dz}(z).
\end{eqnarray}
Note that $\widehat{u}_3$ is the weak limit of the functions $\overline{u}_3^{(j)}$,
so it is independent of $x$ and $y$, see~\eqref{eq:horizontal_averages_u3}.
Given $\varphi \in \mathcal{C}_c(0,1)$, we compute
\begin{eqnarray*}
 \left< \frac{d \widehat{u}_3}{dz}(z), \varphi(z) \right>
&=&
- \int_0^1 \widehat{u}_3(z) \frac{d \varphi}{dz}(z) \,dz
\\
&=&
- \lim_{j \to \infty} 
\int_0^1 \overline{u}_3^{(j)}(z) \frac{d \varphi}{dz}(z) \,dz
\\
&=&
- \lim_{j \to \infty} 
\int_0^1 \left( \fint_{x_1^2+x_2^2<1} u_3^{(j)}(x,y,z) 
\frac{d \varphi}{dz}(z) \,\hau(x,y) \right) \,dz
\\
&=&
\frac{1}{\pi} \lim_{j \to \infty} 
\int_\Omega \partial_{z} u_3^{(j)}(x,y,z) \varphi(z) \,d(x,y,z)
\\
&=&
\frac{1}{\pi}\int_\Omega \widehat{\varepsilon}_{33}(x,y,z) \varphi(z) \,d(x,y,z)
\\
&=&
\int_0^1 \left( \fint_{x_1^2+x_2^2<1} \widehat{\varepsilon}_{33}(x,y,z)  
\,\hau(x,y) \right) \varphi(z)\,dz
\\
&=&
\int_0^1 \varepsilon_{33}(z) \varphi(z) \,dz,
\end{eqnarray*}
which proves the claim~\eqref{la7}.
Summarizing, \eqref{eq:e33hatLB} and \eqref{la7} yield
\begin{eqnarray}\label{eq:Jensen}
\frac{1}{2\pi}\int_{\Omega} a_{\eta}(\widehat{\alpha}(\boldsymbol{x}))E(\widehat{\varepsilon}_{33})^2d\boldsymbol{x}
&=&
\frac{1}{2}\int_{0}^1  a_{\eta}(\widehat{\alpha}(z)) E\, \big(\widehat{u}_3'(z) \big)^2 \,dz
\\
&& 
+ \frac{1}{2}\int_{0}^1  a_{\eta}(\widehat{\alpha}(z))
E\left(\intp\left( \vareg_{33}(x,y,z)-\varepsilon_{33}(z) \right)^2
\,d\mathcal{H}^2(x,y)\right) \,dz.
\nonumber
\end{eqnarray}

Invoking again~\eqref{la6}, and recalling that $\widehat \alpha$ is independent of $x$ and $y$,
we also see that
\begin{eqnarray}
    \label{eq:quadratic-dAlpha_dZ}
    \int_\Omega
    \left ( \frac{\partial \alpha^{(j)}}{\partial z} \right)^2
    =
    \pi \int_0^1 |{\widehat \alpha}'(z)|^2 dz
    +
    \int_\Omega
    \left ( \frac{\partial \alpha^{(j)}}{\partial z}  
    - \frac{\partial {\widehat \alpha}}{\partial z}\right)^2 \,d\boldsymbol{x}
    + o(1).
\end{eqnarray}
Combining~\eqref{eq:Laplacian+trace}, \eqref{eq:Jensen}, and \eqref{eq:quadratic-dAlpha_dZ}
yields~\eqref{eq:conclusion_liminf}, and proves the Proposition.
\end{proof}
\bigskip

\subsection{Energy upper bound} \label{se:EUB}


\begin{proof}[Proof of Theorem \ref{Teo2}]

\noindent{Step 1~: the $\Gamma$-$\liminf$ inequality}

Let $(\vec{u}^{(j)}, \alpha^{(j)})_{j \in \mathbb{N}}$ be a sequence in $\mathcal{A}$,
such that for some $( \widehat{\vec u}, \widehat{\alpha} ) \in \mathbb{A}$
\begin{eqnarray*}
\overline{u}_3^{(j)} &\rightharpoonup& \widehat{u}_3 \quad\textrm{weakly in}\; L^2(0,1)
\\
\alpha^{(j)} &\rightarrow& \widehat{\alpha} \quad\textrm{strongly in}\; L^2(0,1).
\end{eqnarray*}
Without loss of generality (taking a subsequence if necessary), we may assume that 
the $\liminf  E_j [\vec u^{(j)}, \alpha^{(j)}] $ is finite, and is actually a limit.
Hence, Theorem \ref{Teo1} can be invoked and we may assume, up to extraction of
a subsequence, that 
\[
\frac{\partial u_1^{(j)}}{\partial x} \rightharpoonup \widehat{\varepsilon}_{11},
\quad
\frac{\partial u_2^{(j)}}{\partial y} \rightharpoonup \widehat{\varepsilon}_{22},
\quad
\frac{\partial u_3^{(j)}}{\partial z} \rightharpoonup \widehat{\varepsilon}_{33},
\]
as stated in Theorem~\ref{Teo1}. The latter Theorem further shows that
\begin{eqnarray*}
\overline{u}_3^{(j)} &\rightarrow& \widehat{u}_3,
\end{eqnarray*}
strongly in $L^2(0,1)$.
In particular, it follows that the limiting state $\big(  \widehat {\vec u}, \widehat \alpha\big )$ 
belongs to the uniaxial class $\mathcal{S}$ defined by~\eqref{eq:uniaxial_space}.
Invoking again Theorem \ref{Teo1}, passing to a subsequence if necessary, 
we find that
\begin{eqnarray*}
\nabla \alpha^{(j)} \rightharpoonup \nabla \widehat \alpha, \quad \textrm{weakly  in}\; L^2(\Omega),
&\quad&
\alpha^{(j)} \to \widehat \alpha \text{ pointwise a.e.}
\end{eqnarray*}
and that
\[
\frac{\partial u_1^{(j)}}{\partial x} \rightharpoonup \widehat{\varepsilon}_{11}, \quad
\frac{\partial u_2^{(j)}}{\partial y} \rightharpoonup \widehat{\varepsilon}_{22}, \quad
\frac{\partial u_3^{(j)}}{\partial z} \rightharpoonup \widehat{\varepsilon}_{33},
\quad \textrm{weakly  in}\; L^2(\Omega),
\]
for some $\widehat{\eps}_{11}$, $\widehat{\eps}_{22}$, $\widehat{\eps}_{33}$ in $L^2(\Omega)$.
The hypotheses of Proposition \ref{Prop1} are thus satisfied,
and~\eqref{eq:conclusion_liminf} establishes the $\Gamma$-$\liminf$ inequality, 
since the remainder terms in this inequality are all nonnegative.
\bigskip


\noindent{Step 2~: the $\Gamma$-$\limsup$ inequality}

Let $\overline{u}_3 \in H^1(0,1)$ satisfy the boundary compression conditions and 
$\overline{\alpha} \in H^1(0,1)$ such that $0 \leq \overline{\alpha}(z) \leq 1$, a.e.\ $z \in (0,1)$. Ideally, one would like to construct three-dimensional strain fields~$\vec{\varepsilon}^{(j)}$ such that
\begin{align} \label{eq:aspiration_UB}
\varepsilon_{11}^{(j)} = \varepsilon_{22}^{(j)} = -\nu \varepsilon_{33}^{(j)}, 
\quad \varepsilon_{12}^{(j)} = \varepsilon_{13}^{(j)} = \varepsilon_{23}^{(j)} = 0,
\end{align}
with $\varepsilon_{33}^{(j)}$ independent from the transverse coordinates $x, y$. 
\medskip

If we defined, for $\vec{x} = (x, y, z) \in \Omega$, 
\begin{eqnarray*} 
\alpha^{(j)}(\vec {x}) = \overline{\alpha}(z), 
&\;\textrm{and}\;& 
u_3^{(j)}(\vec x) = u_3(x,y,z) = \overline{u}_3(z),
\end{eqnarray*}
then, as
\[
\frac{\partial u_3^{(j)}}{\partial x} = \frac{\partial \overline{u}_3(z)}{\partial x} = 0, 
\qquad 
\frac{\partial u_3^{(j)}}{\partial y} = \frac{\partial \overline{u}_3(z)}{\partial y} = 0,
\]
condition~\eqref{eq:aspiration_UB} would imply that
\begin{eqnarray*}
\varepsilon_{13}^{(j)} &=& \frac{1}{2}(
\delta_j \partial_z u_1^{(j)} + \frac{1}{\delta_j} \partial_x u_3^{(j)})
\;=\; \frac{\delta_j}{2} \partial_z u_1^{(j)}  \;=\; 0,
\end{eqnarray*}
and similarly for $\varepsilon_{23}^j$. Consequently, we would have
\begin{eqnarray*}
\partial_z u_1^{(j)} &=& \partial_z u_2^{(j)} \;=\; 0,
\end{eqnarray*}
so that 
\begin{eqnarray*}
\partial_z \varepsilon_{11}^{(j)} &=&
\partial^2_{zx} u_1^{(j)} \;=\; 
- \nu \partial^2_{zz} u_3^{(j)} \;=\;0.
\end{eqnarray*}
However, $\overline{u}_3$ is not, in general, an affine function of $z$,
so that requiring that $\vec{u}^{(j)}$ and the associated strains~$\vec{\varepsilon}^{(j)}$
satisfy~\eqref{eq:aspiration_UB} is too constraining.
\medskip

To relax this condition, we define
\begin{align}
    \label{eq:horizontal_adapt}
u_1^{(j)}(x, y, z) := -\nu x\, \varepsilon_{33}^{(j)}(z),
\quad 
u_2^{(j)}(x, y, z) := -\nu y\, \varepsilon_{33}^{(j)}(z),
\quad 
u_3^{(j)}(x,y,z) := 
\overline{u}_3 (z),
\end{align}
where $\varepsilon_{33}^{(j)}$ is a smoothened version of $\overline{u}_3'(z)$, 
whose construction will be carried out below.
As for the damage variables, the choice
\begin{eqnarray*}
\alpha^{(j)}(\vec{x}) &=& \widehat{\alpha}(z),
\end{eqnarray*}
i.e., independent of $j$ and of $x,y$, will do the trick.  
The longitudinal shear strain then take the form
\begin{eqnarray*}
\varepsilon_{13}^{(j)} &=& 
\frac{1}{2} \left( \delta \frac{\partial u_1^{(j)}}{\partial z} 
+ \delta^{-1}\frac{\partial  u_3^{(j)}}{\partial x} \right)
\\
&=& 
\frac{1}{2} \left( -\delta \nu x \frac{\partial \varepsilon_{33}^{(j)}}{\partial z}
+ \delta^{-1}\frac{\partial  u_3^{(j)}}{\partial x} \right),
\\
\varepsilon_{23}^{(j)} &=& 
\frac{1}{2} \left( \delta \frac{\partial u_2^{(j)}}{\partial z} 
+ \delta^{-1}\frac{\partial  u_3^{(j)}}{\partial y} \right)
\\
&=& 
\frac{1}{2} \left( -\delta \nu y \frac{\partial \varepsilon_{33}^{(j)}}{\partial z}
+ \delta^{-1}\frac{\partial  u_3^{(j)}}{\partial y} \right).
\end{eqnarray*}
Inserting these test fields in the expression of the energy, we obtain
\begin{eqnarray*}
 E_j[{\vec u}^{(j)}, \alpha] &= &\frac{1}{\pi} \int_{\Omega} 
 a_{\eta}(\overline \alpha(z))
\left[
\mu \left( \frac{\partial u_1^{(j)}}{\partial x} \right)^2
+ \mu \left( \frac{\partial u_2^{(j)}}{\partial y} \right)^2
+ \mu \left( \frac{\partial u_3^{(j)}}{\partial z} \right)^2
\right.  \\
&& \left.
+ \frac{\lambda}{2} \left( \frac{\partial u_1^{(j)}}{\partial x}
+ \frac{\partial u_2^{(j)}}{\partial y}
+ \frac{\partial u_3^{(j)}}{\partial z} \right)^2
+ \frac{\mu}{2} \left( \frac{\partial u_1^{(j)}}{\partial y}
+ \frac{\partial u_2^{(j)}}{\partial x} \right)^2
+ \frac{\mu}{2} \left( \delta_j \frac{\partial u_1^{(j)}}{\partial z}
+ \delta_j^{-1} \frac{\partial u_3^{(j)}}{\partial x} \right)^2
\right.
\\
&& \left.
+ \frac{\mu}{2} \left( \delta_j \frac{\partial u_2^{(j)}}{\partial z}
+ \delta_j^{-1}\frac{\partial u_3^{(j)}}{\partial y} \right)^2\right]
+ w(\overline{\alpha}(z)) 
+ \frac{w_1}{2}  \frac{l^2}{L^2} |\overline{\alpha}'(z)|^2 \,d\boldsymbol{x}
\\
&=& 
\frac{1}{\pi} \int_{\Omega} a_{\eta}(\overline{\alpha}(z)) \left[
\mu \nu^2 \left| \varepsilon_{33}^{(j)} \right|^2
+ \mu \nu^2  \left| \varepsilon_{33}^{(j)} \right|^2
+ \mu \left|{\overline{u}}_3' \right|^2
+ \frac{\lambda}{2} \left( {\overline{u}}_3' - 2\nu \varepsilon_{33}^{(j)} \right)^2  \right.
\\
&&
+ \frac{\mu}{2}\, \nu^2\, \delta_j^2 x^2 \left(
\frac{\partial}{\partial z} \varepsilon_{33}^{(j)}
\right)^2
+ \frac{\mu}{2}\, \nu^2\, \delta_j^2 y^2 \left(
\frac{\partial}{\partial z} \varepsilon_{33}^{(j)}
\right)^2\Bigg]
+ w(\overline{\alpha}(z))
+ \frac{w_1}{2}  \frac{l^2}{L^2} \left|\alpha'(z) \right|^2 \,d\boldsymbol{x}
\\
&=&
\frac{1}{\pi} \int_{\Omega} a_{\eta}(\overline{\alpha}(z))
\Bigg[
\left( \mu (2\nu^2 + 1) + \frac{\lambda}{2} (2\nu - 1)^2 \right) \left| \varepsilon_{33}^{(j)} \right|^2 
+ \Big ( \mu + \frac{\lambda}{2}\Big ) \Big ( |\overline{u}_3'|^2 - |\varepsilon_{33}^{(j)}|^2 \Big ) 
\\
&&
+ 2\lambda\nu  \varepsilon_{33}^{(j)} \Big (
 \varepsilon_{33}^{(j)} - \overline{u}_3' \Big ) 
+ (x^2 + y^2)\, \frac{\mu}{2} \nu^2 \delta_j^2 
\left| \frac{\partial}{\partial z} \varepsilon_{33}^{(j)} \right|^2\Bigg] 
+ w(\overline{\alpha}(z)) + w_1 \frac{\ell^2}{2L^2} \left| \alpha'(z) \right|^2
\,d\boldsymbol{x}.
\end{eqnarray*}
\medskip

Noting that
\begin{eqnarray}\label{la8}
\mu(2\nu^2 + 1) + \frac{\lambda}{2} (2\nu - 1)^2
&=&    
\mu \left( 2\left(\frac{\lambda}{2(\lambda + \mu)}\right) ^2 + 1\right) + \frac{\lambda}{2} \left( \frac{2\lambda}{2(\lambda + \mu)} - 1 \right)^2\nonumber
\\
&=&
\frac{\lambda^2 \mu + 2 \lambda^2 \mu + 4 \lambda\mu^2 
+ 2 \mu^3 + \lambda^3 - 2 \lambda^3 - 2\lambda^2\mu + \lambda^3
+ 2\lambda ^2\mu + \lambda \mu^2}{2(\lambda + \mu)^2}
\nonumber
\\
&=&
\frac{\mu \left( \lambda + \mu \right) \left( 3\lambda + 2\mu \right) }
{2(\lambda + \mu)^2} \;=\; \frac{1}{2}E,
\end{eqnarray}
the expression of $E_j[\vec u^{(j)}, \alpha]$ reduces to
\begin{eqnarray} \label{eq_Ej}
 E_j[\vec u^{(j)}, \alpha]
&=&
\frac{1}{\pi} \int_{\Omega} 
a_{\eta}(\overline{\alpha}(z)) \left[ \frac{E}{2} \left| \varepsilon_{33}^{(j)} \right|^2 
+ \Big( \mu + \frac{\lambda}{2} \Big) \Big( |\overline{u}_3'|^2 - |\varepsilon_{33}^{(j)}|^2 \Big) 
+ 2\lambda\nu  \varepsilon_{33}^{(j)} \Big( \varepsilon_{33}^{(j)} - \overline{u}_3' \Big)
\right . 
\nonumber \\ 
&& 
\left . 
+ (x^2 + y^2) \frac{\mu}{2} \nu^2 \delta_j^2 
\left| \frac{\partial}{\partial z} \varepsilon_{33}^{(j)} \right|^2  \right]
+ w(\overline{\alpha}(z)) 
+ w_1 \frac{\ell^2}{2L^2} \left| \overline{\alpha}'(z) \right|^2 \,d\boldsymbol{x} 
\nonumber \\
&=&
\frac{1}{\pi}\int_{\Omega} 
a_{\eta} (\overline{\alpha}(z))\frac{E}{2}(\overline{u}_3'(z))^2
+ w(\overline{\alpha}(z)) + w_1 \frac{\ell^2}{2L^2} \left| \overline{\alpha}'(z) \right|^2 
\,d\boldsymbol{x} 
\nonumber \\      
&&
+ \frac{1}{\pi}\int_{\Omega} a_{\eta}(\overline{\alpha}(z))\Bigg[
\Bigg( \mu + \frac{\lambda}{2} - \frac{E}{2} \Bigg)
\left( |\overline{u}_3'|^2 -|\varepsilon_{33}^{(j)} |^2\right)
+ 2\lambda\nu  \varepsilon_{33}^{(j)} \Big( \varepsilon_{33}^{(j)} - \overline{u}_3' \Big)
\nonumber \\ 
&& 
+ (x^2 + y^2) \frac{\mu}{2} \nu^2 \delta_j^2 
\left| \frac{\partial}{\partial z} \varepsilon_{33}^{(j)} \right|^2 \Bigg]
\,d\boldsymbol{x}.
\end{eqnarray}
\medskip

We now proceed to construct the functions $\varepsilon_{33}^{(j)}$, so that
$\varepsilon_{33}^{(j)} \to \overline{u}_3^\prime$ in $L^2(0,1)$,
and so that 
\begin{eqnarray*}
\frac{\mu}{2} (x^2 + y^2)\, \nu^2  \left\|\delta_j\, \frac{\partial}{\partial z} \varepsilon_{33}^{(j)} \right\|^2
&\rightarrow& 0.
\end{eqnarray*}
The latter condition requires that $\delta_j \partial_z \varepsilon_{33}^{(j)}$ tends
to 0, and hence that $\partial_z \varepsilon_{33}^{(j)}$ should not grow too rapidly.
The specific construction of $\varepsilon_{33}^{(j)}=v_{k_j}$ must then balance the approximation 
of $\overline{u}_3'(z)$  with a strict control of its derivative.
To this end, we consider a mollifier $\rho \in {\mathcal C}^\infty_0(\mathbb{R})$, 
and set
\begin{eqnarray*}
v_k(z)  &:=& \overline{u}_3'*\rho_{1/\sqrt{k}}(z),
\end{eqnarray*}
where $\rho_s(z) = s^{-1} \rho(z/s)$ for $s > 0$, extending the
definition  of $\overline{u}_3^\prime$ by $0$ outside of $(0,1)$, so that
the convolution is well defined. The functions $v_k$ satisfy
\begin{eqnarray*}
v_k &=&
\overline{u}_3'*\rho_{\frac{1}{\sqrt{k}}}
\;\to\;  \overline{u}_3' 
\quad \textrm{strongly in}\; L^2(0,1)\;\textrm{as}\; k\to \infty,
\end{eqnarray*}
and
\begin{eqnarray*}
\frac{d}{dz}v_k &=& 
\frac{d}{dz}\left (\overline{u}_3'*\rho_{\frac{1}{\sqrt{k}}}\right)
\;=\;
\overline{u}_3'* \left( k\rho'\left(\sqrt{k}z\right)  \right).
\end{eqnarray*}
It follows that
\begin{eqnarray*}
\left\|\frac{d}{dz}v_k \right\|_{L^\infty ( (0,1) )}
&\leq& 
\|\overline{u}_3'(z)\|_{L^1} \left\|k\rho'\left(\sqrt{k}z\right)  \right\|_{L^{\infty}}
\;=\;
k\|\overline{u}_3'(z)\|_{L^1}\left\|\rho'\left(\sqrt{k}z\right)  \right\|_{L^{\infty}}
\;\leq\;  k M\|\overline{u}_3'\|_{L^1}.
\end{eqnarray*}
Given that 
$\overline{u}_3'(z)\in L^2\big ( (0,1)\big ) \subset L^{1}\big ( (0,1)\big )$, we see that
for some constant $C >0$, independent of $k$,
\begin{eqnarray*}
\left\|\frac{d}{dz}v_k \right\|_{L^{\infty}((0,1))} &\leq& Ck.
\end{eqnarray*}

We define $\eps_{33}^{(j)}$ by taking the diagonal subsequence
$\eps_{33}^{(j)} = v_{k_j}$, with the choice
${k_j}= \left\lfloor \delta_j^{-1/2} \right\rfloor$,
which garantees that
\begin{eqnarray*}
\eps_{33}^{(j)} = v_{k_j} \to \overline{u}_3' \quad \text{in } L^2(0,1),
&\;\text{and}\;&
\left\| \frac{\partial}{\partial z} \varepsilon_{33}^{(j)} \right\|_{L^\infty} 
= \left\| \frac{\partial}{\partial z} v_{k_j} \right\|_{L^{\infty}} 
\;\leq\; Ck_j\approx C\delta_{j}^{-1/2},
\end{eqnarray*}
so that
\begin{eqnarray*}
\delta_j^2\left\| \frac{\partial}{\partial z} \varepsilon_{33}^{(j)} \right\|_{L^2}^2 
&\leq&
\delta_j^2 \cdot \delta_j^{-1} \;=\; \delta_j
\;\to\; 0 \quad \textrm{as}\; j \to \infty.
\end{eqnarray*}
We conclude from~\eqref{eq_Ej}that
\begin{eqnarray*}
\lim_{j \to \infty} E_j[\vec u^{(j)}, \alpha] 
&=&
\int_0^1 a_\eta\left( \overline{\alpha}(z) \right) \cdot
\frac{1}{2} E \overline{u}_3'(z)^2 + w\left( \overline{\alpha}(z) \right) +
\frac{w_1 \ell^2}{2 L^2} \left| \overline{\alpha}'(z) \right|^2 \, \mathrm{d}z,
\end{eqnarray*}
as claimed.\\
\end{proof}


\section{Convergence of minimizers} \label{se:minimizers}


\begin{proof}[Proof of Theorem \ref{Main}.]

We consider the functions 
$\boldsymbol{u}_{\text{test}}(\boldsymbol{x})
=(u_1(\boldsymbol{x}),u_2(\boldsymbol{x}),u_3(\boldsymbol{x}))$, 
and  $\alpha_{\text{test}}(\boldsymbol{x})$, defined by
\begin{align}
    \label{eq:test_function}
u_1(x,y,z)=\nu \varepsilon_zx, \quad u_2(x,y,z)=\nu \varepsilon_zy, 
\quad u_3(x,y,z)=-\varepsilon_zz, \quad \alpha_{\text{test}}(x,y,z)\equiv 0.
\end{align}
Note that $\boldsymbol{u}_{\text{test}}$ is the minimizer of the energy when no damage is present, and that Equations \eqref{eq:aspiration_UB} are satisfied.
Since $a_\eta(0)=1$ and $w(0)=0$, see~\eqref{la8},
we evaluate the functional $E_j$ at the test pair 
$\big( \vec u_{\text{test}}, \alpha_{\text{test}} \big)$~:
\begin{eqnarray*}
E_j[\boldsymbol{{u}}_{\text{test}},\alpha_{\text{test}}]
&=&
\frac{1}{\pi}\int_{\Omega} a_{\eta}(\alpha_{\text{test}}(\boldsymbol{x}))\left[ \mu (\nu \varepsilon_z)^2+\mu (\nu \varepsilon_z)^2+\mu \varepsilon_z^2 + \frac{\lambda}{2}\left(\nu \varepsilon_z+\nu \varepsilon_z- \varepsilon_z\right)^2 \right]+w(\alpha_{\text{test}}(\boldsymbol{x})) \,dx
\\
&=&
\frac{1}{\pi}\int_{\Omega}\left[ \mu\varepsilon_z^2(2\nu^2+1)
+ \frac{\lambda}{2}\varepsilon_z^2(2\nu-1)^2 \right] \,dx
\;=\;
\frac{E}{2}\varepsilon_z^2.
\end{eqnarray*}
Since by hypothesis, $(\boldsymbol{u}^{(j)},\alpha^{(j)})$ minimizes 
$E_{j}[\boldsymbol{u},\alpha]$, we infer that
\begin{align}
    \label{eq:aPrioriBound}
E_{j}[\boldsymbol{u}^{(j) },\alpha^{(j)}]
\leq 
E_j[\boldsymbol{u}_{\text{test}},\alpha_{\text{test}}]
\;=\; \frac{E}{2}\varepsilon_z^2,
\end{align}
so that $\displaystyle{E_{j}[\boldsymbol{u}^{(j) },\alpha^{(j)}]\leq M}$ 
is uniformly bounded.
It follows from Theorem~\ref{Teo1} that there exists
$\widehat{u}_3,\hspace{0.1cm} \widehat \alpha$ in $H^1(\Omega)$, depending only on $z$,
and
$\widehat{\varepsilon}_{11}, \widehat{\varepsilon}_{22}, \widehat{\varepsilon}_{33}$ in 
$L^2(\Omega; \mathbb{R}^{3})$, which satisfy~\eqref{la0}, such that
\[ \left\{ \begin{array}{c}
\widehat{u}_3(\cdot, 0) = 0, 
\quad \widehat{u}_3(\cdot, 1) = -\varepsilon_z,
\qquad 0\leq \widehat \alpha\leq 1\quad \text{a.e. in}\; (0,1),
\\[3pt]
\frac{\partial u_1^{(j)}}{\partial x} \rightharpoonup \widehat{\varepsilon}_{11}, \qquad
\frac{\partial u_2^{(j)}}{\partial y} \rightharpoonup \widehat{\varepsilon}_{22}, \qquad
\frac{\partial u_3^{(j)}}{\partial z} \rightharpoonup \widehat{\varepsilon}_{33},
\\[3pt]
\nabla \alpha^{(j)} \rightharpoonup \nabla \widehat\alpha, \qquad
\alpha^{(j)} \to \widehat\alpha \text{ a.e. in}\; \Omega,
\end{array} \right.
\]
and such that the sequence $\overline{u}^{(j)}$, defined in \eqref{eq:horizontal_averages_u3},
converges to $\widehat u_3$ weakly in $H^1\big ( (0,1) \big )$ and strongly in $L^2\big ((0,1) \big )$.
We define the vector-valued map $\widehat{\vec u}\in H^1(\Omega;\R^3)$ by
\begin{eqnarray*}
\widehat{\vec u}(x,y,z) &:=& 
\big(0, 0, \widehat u_3(z) \big), \quad (x,y,z) \in \Omega.
\end{eqnarray*}
Applying Proposition \ref{Prop1}, we obtain
\begin{eqnarray}\label{la9}
\lf E_{j} [\boldsymbol{u}^{(j)}, \alpha^{(j)}] 
&\geq&
E_\infty [\widehat {\vec u}, \widehat\alpha] + J + \lf K^{(j)},
\end{eqnarray}
where 
\begin{eqnarray*}
J &=&
\frac{1}{\pi} \int_{\Omega}a_{\eta}(\widehat{\alpha}(\boldsymbol{x}))
\left[ \frac{\mu}{2} (\widehat{\varepsilon}_{11} - \widehat{\varepsilon}_{22})^2
+ 2(\lambda+\mu)\left(\frac{\widehat{\varepsilon}_{11} + \widehat{\varepsilon}_{22}}{2}+\nu \widehat{\varepsilon}_{33}  \right)^2\right] \,d\boldsymbol{x}
\\
&&
+\frac{1}{2}\int_{0}^1  a_{\eta}(\widehat{\alpha}(z))E
\left(\intp\left( \vareg_{33}(x,y,z)-\varepsilon_{33}(z) \right)^2 \,d\mathcal{H}^2(x,y) 
\,\right)dz 
\\
K^{(j)} &=&
\left\{       \frac{\eta \mu}{2\pi} \int_{\Omega}
\left[ \left( \frac{\partial u_1^{(j)}}{\partial y} 
+ \frac{\partial u_2^{(j)}}{\partial x} \right)^2
+ \left( \delta_j \frac{\partial u_1^{(j)}}{\partial z} + \delta_j^{-1} \frac{\partial u_3^{(j)}}{\partial x} \right)^2 + \left( \delta_j \frac{\partial u_2^{(j)}}{\partial z} + \delta_j^{-1} \frac{\partial u_3^{(j)}}{\partial y} \right)^2 \right] \, d\boldsymbol{x} \right.
\\ 
&&
+ \frac{\mu}{\pi}     
\int_{\Omega}
\left[ 
\left( \sqrt{a_{\eta} (\widehat{\alpha} (\boldsymbol{x}))} \widehat{\varepsilon}_{11}
- \sqrt{a_{\eta} (\alpha^{(j)} (\boldsymbol{x}))} \frac{\partial u_1^{(j)}}{\partial x} \right)^2 
+
\left( \sqrt{a_{\eta} (\widehat{\alpha} (\boldsymbol{x}))} \widehat{\varepsilon}_{22}
- \sqrt{a_{\eta} (\alpha^{(j)} (\boldsymbol{x}))} \frac{\partial u_2^{(j)}}{\partial y} \right)^2
\right.
\\
&&\hspace*{10mm}
+
\left.
\left( \sqrt{a_{\eta} (\widehat{\alpha} (\boldsymbol{x}))} \widehat{\varepsilon}_{33}
- \sqrt{a_{\eta} (\alpha^{(j)} (\boldsymbol{x}))} \frac{\partial u_3^{(j)}}{\partial z} \right)^2
\right] 
\,d \vec{x} 
\\
&&
+ \frac{1}{2} w_1 \left( \frac{l}{L} \right)^2
\int_{\Omega}
\left[ \delta_j^{-2} \left( \frac{\partial \alpha^{(j)}}{\partial x} 
- \frac{\partial {\widehat \alpha}}{\partial x} \right)^2 
+ \delta_j^{-2} \left( \frac{\partial \alpha^{(j)}}{\partial y} 
- \frac{\partial {\widehat \alpha}}{\partial y}\right)^2 
+ \left( \frac{\partial \alpha^{(j)}}{\partial z}  
- \frac{\partial {\widehat \alpha}}{\partial z}\right)^2 \right]
\, d\boldsymbol{x}
\Bigg\}.
\end{eqnarray*}
By Theorem \ref{Teo2}-$(ii)$ applied to  $(\widehat{\vec u},\widehat{\alpha})$, there exist  $\left(\widehat{\boldsymbol{u}}^{(j)}_{r},\widehat{\alpha}^{(j)}_{r}\right)_{j\in \mathbb{N}}$ in $\mathcal{A}$ such that
\begin{eqnarray}\label{la10}
\limsup_{j\to \infty} E_{j}\left[ \widehat{\boldsymbol{u}}^{(j)}_{r},\widehat{\alpha}_{r}^{(j)}\right]= E_\infty [\widehat{\vec u},\widehat{\alpha}].
\end{eqnarray}
Since, for each $j$, $\big (\boldsymbol{u}^{(j)}, \alpha^{(j)}\big )$
is a minimizer,
\[
    \liminf_{j\to\infty} E_{j} [\boldsymbol{u}^{(j)}, \alpha^{(j)}]
    \leq
    \limsup_{j\to\infty} E_{j} [\boldsymbol{u}^{(j)}, \alpha^{(j)}]
\leq
\limsup_{j\to\infty} E_{j} [\widehat{\boldsymbol{u}}_r^{(j)}, \widehat{\alpha}^{(j)}_r].
\]
Thus we deduce from~\eqref{la9} and~\eqref{la10} that
\begin{eqnarray*}
E_\infty [\widehat{\vec u}, \widehat\alpha] \;+\; J \;+\; \;\liminf_{j\to \infty} K^{(j)}
&\leq&
E_\infty [\widehat{\vec u}, \widehat\alpha].
\end{eqnarray*}
Passing to a subsequence, we may assume, without loss of generality, that the lim-inf 
in the above inequalities is actually a limit.
Since expressions of $J$ and $K^{(j)}$ are sums of squares, we obtain
\begin{eqnarray}
    \label{eq:limit_Poisson}
\frac{1}{\pi} \int_{\Omega}a_{\eta}(\widehat{\alpha}(\boldsymbol{x}))
\left[ \frac{\mu}{2} (\widehat{\varepsilon}_{11} - \widehat{\varepsilon}_{22})^2
    + 2(\lambda+\mu)\left(\frac{\widehat{\varepsilon}_{11} + \widehat{\varepsilon}_{22}}{2}+\nu \widehat{\varepsilon}_{33}  \right)^2\right] \,d\boldsymbol{x}
&=& 0.
\end{eqnarray}
\begin{eqnarray} \label{eq:limitJensen}
    \int_{0}^1  a_{\eta}(\widehat{\alpha}(z))E\left(\intp\left( \vareg_{33}(x,y,z)-\varepsilon_{33}(z) \right)^2d\mathcal{H}^2(x,y)\right) \,dz &=& 0,
\end{eqnarray}
\begin{eqnarray} \label{eq:limitShearStrains}
\int_{\Omega}
\left( \frac{\partial u_1^{(j)}}{\partial y} + \frac{\partial u_2^{(j)}}{\partial x} \right)^2
+ \left( \delta_j \frac{\partial u_1^{(j)}}{\partial z} + \delta_j^{-1} \frac{\partial u_3^{(j)}}{\partial x} \right)^2 + \left( \delta_j \frac{\partial u_2^{(j)}}{\partial z} + \delta_j^{-1} \frac{\partial u_3^{(j)}}{\partial y} \right)^2  \,d\boldsymbol{x}
&\to& 0,
\end{eqnarray}
\begin{eqnarray} \label{eq:limitNormalStrains1}
\int_{\Omega}
\left( \sqrt{a_{\eta} (\widehat{\alpha} (\boldsymbol{x}))} \widehat{\varepsilon}_{11}
- \sqrt{a_{\eta} (\alpha^{(j)} (\boldsymbol{x}))} \frac{\partial u_1^{(j)}}{\partial x} \right)^2 \,d\boldsymbol{x}
&\to& 0,
\end{eqnarray}

\begin{eqnarray} \label{eq:limitNormalStrains2}
\int_{\Omega}
\left( \sqrt{a_{\eta} (\widehat{\alpha} (\boldsymbol{x}))} \widehat{\varepsilon}_{22}
- \sqrt{a_{\eta} (\alpha^{(j)} (\boldsymbol{x}))} \frac{\partial u_2^{(j)}}{\partial y} \right)^2 \,d\boldsymbol{x}
&\to& 0,
\end{eqnarray}

\begin{eqnarray} \label{eq:limitNormalStrains3}
\int_{\Omega}
\left( \sqrt{a_{\eta} (\widehat{\alpha} (\boldsymbol{x}))} \widehat{\varepsilon}_{33}
- \sqrt{a_{\eta} (\alpha^{(j)} (\boldsymbol{x}))} \frac{\partial u_3^{(j)}}{\partial z} \right)^2 \,d\boldsymbol{x}
&\to& 0,
\end{eqnarray}

\begin{eqnarray}\label{eq:limitAlpha}
\int_{\Omega}
\delta_j^{-2} \left( \frac{\partial \alpha^{(j)}}{\partial x} -\frac{\partial {\widehat \alpha}}{\partial x} \right)^2 + \delta_j^{-2} \left( \frac{\partial \alpha^{(j)}}{\partial y} 
- \frac{\partial {\widehat \alpha}}{\partial y}\right)^2 
+ \left( \frac{\partial \alpha^{(j)}}{\partial z}  - \frac{\partial {\widehat \alpha}}
{\partial z}\right)^2 \,d\boldsymbol{x}
&\to& 0.
\end{eqnarray}
Using that $\displaystyle{a_{\eta}(\widehat{\alpha}(\boldsymbol{x})) \geq \eta > 0}$,
equation~\eqref{eq:limit_Poisson} yields
\begin{eqnarray*}
\frac{\mu}{2} (\widehat{\varepsilon}_{11} - \widehat{\varepsilon}_{22})^2
    + 2(\lambda+\mu)\left(\frac{\widehat{\varepsilon}_{11} + \widehat{\varepsilon}_{22}}{2}+\nu \widehat{\varepsilon}_{33}  \right)^2=0,
\end{eqnarray*}
and thus $\widehat{\varepsilon}_{11} = \widehat{\varepsilon}_{22} = -\nu \widehat{\varepsilon}_{33}$.
Combining~\eqref{eq:limitJensen}, \eqref{la0}, and~\eqref{la7}
now gives $\vareg_{33}(x,y,z) = \varepsilon_{33}(z) = \widehat{u}'(z)$
and hence
\begin{align}
    \label{eq:uniaxial_strain}
 \widehat{\varepsilon}_{11}=\widehat{\varepsilon}_{22}= -\nu \widehat{u}'(z).
\end{align}
\medskip

On the other hand, by~\eqref{eq:limitNormalStrains1} and Lemma~\ref{Lemma1}
from the Appendix, applied to
\begin{align*}
g^{(j)}(x)=\frac{1}{\sqrt{a_{\eta}\left(\alpha ^{(j)}(\boldsymbol{x})\right)}},
\end{align*}
and to the function $f_x^{(j)}$ defined in \eqref{def_fx}, we see that
\begin{eqnarray*}
    \frac{\partial u_1^{(j)}}{\partial x}=\left(\sqrt{a_{\eta} \left(\alpha ^{(j)}(\boldsymbol{x})\right)} \frac{\partial u_1^{(j)}}{\partial x}\right)\frac{1}{\sqrt{a_{\eta}\left(\alpha ^{(j)}(\boldsymbol{x})\right)}}\xrightarrow{j\to\infty} \vareg_{11},\hspace{0.3cm}\mbox{in } L^{2}(\Omega).
\end{eqnarray*}
In view of the definitions~(\ref{def_fy}-\ref{def_fz})
and of the convergences~(\ref{eq:limitNormalStrains2}-\ref{eq:limitNormalStrains3}),
the same argument applies to show that 
\begin{eqnarray*}
\frac{\partial u_2^{(j)}}{\partial y} \to \widehat{\varepsilon}_{22}
&\textrm{and}&
\frac{\partial u_3^{(j)}}{\partial z} \to \widehat{\varepsilon}_{33},
\end{eqnarray*}
strongly in $L^2$,
and consequently\begin{eqnarray*}
\int_\Omega \left |\frac{\partial u_1^{(j)}}{\partial x}(x,y,z) + \nu \frac{\dd \widehat{u}_3}{\dd z}(z)\right |^2 \dd\vec x
\longrightarrow{0},
\quad
\int_\Omega \left |\frac{\partial u_2^{(j)}}{\partial y}(x,y,z) + \nu \frac{\dd \widehat{u}_3}{\dd z}(z)\right |^2 \dd\vec x
\longrightarrow{0},
\end{eqnarray*}
\begin{eqnarray*}
\int_{\Omega}\left|
\frac{ \partial u_3^{(j)}}{\partial z} (x,y,z) -\frac{\dd \widehat{u}_3}{\dd z}(z)\right|^{2}d(x,y,z)\longrightarrow  0.
\end{eqnarray*}
We now prove that $||u_3^{(j)} - \widehat{u}_3||_{L^2(\Omega)} \to 0$.
Indeed, since
\begin{eqnarray*}
    u_3^{(j)}(x,y,z)-\widehat{u}_3(z)=\int_{s=0}^z \left(\partial_z u_3^{(j)}(x,y,s) -\widehat{u}_3'(s)\right) \,ds,
\end{eqnarray*}
it follows that
\begin{eqnarray*}
||u_3^{(j)} - \widehat{u}_3||_{L^2(\Omega)}^2
&\leq&
\int_{x^2+y^2\leq 1}\int_{z=0}^1 z \int_{s=0}^z\left| \partial_z u_3^{(j)}(x,y,s) 
-\widehat{u}_3'(s)\right|^2 ds\, dz\, \hau (x,y)
\\
&\leq& 
\int_{s=0}^1 \int_{x^2+y^2\leq 1} \left| \partial_z u_3^{(j)}(x,y,s) 
- \widehat{u}_3'(s)\right|^2  \,\hau (x,y)\,ds
\\
&=&
\int_{\Omega} \left| \partial_z u_3^{(j)} - \widehat{u}_3'\right|^2 d\vec x 
\;\;\to\; 0, \quad\textrm{as}\; j \to \infty.
\end{eqnarray*}
\medskip

Finally, the proof that $\big ( \widehat{\vec u}, \widehat{\alpha}\big )$ 
minimizes $E_\infty$ in $\mathcal A$ is standard. We repeat it here for the convenience of the reader, since we work with the non-standard topology of the $L^2$~convergence of the horizontal averages 
$\overline{u}_3^{(j)}$ of the vertical displacements.
Let $\big ( {\vec u}_c, \alpha_c \big )$ be any competitor pair in $\mathcal A$. By the $\Gamma$-$\limsup$ property, there exists a sequence $\Big ( \big ( \vec u_{c,r}^{(j)}, \alpha^{(j)}_{c,r} \big ) \Big )_{j\in \N}$ in $\mathcal{A}$ such that
$$
\limsup_{j\to\infty} E_j\big [ \vec u_{c,r}^{(j)}, \alpha_{c,r}^{(j)}\big ] \leq E_\infty \big [ {\vec u}_c, \alpha_c].
$$
By the $\Gamma$-$\liminf$ property,
$$  E_\infty \big [ \widehat{\vec u}, \widehat{\alpha}\big ]
    \leq \liminf_{j\to\infty} E_j\big [ \vec u^{(j)}, \alpha^{(j)} \big ].
$$
Since, for each $j\in \N$,  $\big (\vec u^{(j)}, \alpha^{(j)}\big) $ minimizes $E_j$ in $\mathcal{A}$, then $E_j\big [ \vec u^{(j)}, \alpha^{(j)} \big ] \leq E_j\big [ \vec u_{c,r}^{(j)}, \alpha_{c,r}^{(j)}\big ]$. This completes the proof.
\end{proof}


\section*{Acknowledgements}

We thank J.-F.~Babadjian for his suggestion of regarding $a_\eta(\partial_{x_i}u_i^{(j)})^2$
as the square of $\sqrt{a_\eta}\partial_{x_i}u_i^{(j)}$ in the proof of the energy lower bound \eqref{eq:Laplacian}, as in \cite{ambrosio1990approximation,ambrosio1992approximation}, which improved a first draft where instead of $|\nabla \alpha|^2$ the choice had been made of an exponent larger than the space dimension, for uniform convergence. 
Also, we are grateful to L.~Scardia for pointing out that our analysis led to a complete proof of $\Gamma$-convergence (Theorem \ref{Teo2}) and not only the result for minimizers (Theorem \ref{Main}), and to B.~Bourdin for insightful discussions on gradient damage models. 

\smallskip
The problem of proving convergence to the uniaxial damage model was motivated by the use that will be made of the one-dimensional equations in the identification of parameters from uniaxial compression tests, as a part of a larger study in underground mining by CMM. The heuristics and perspectives gained from the discussions with S.~Gaete, A.~Jofré, R.~Lecaros, J.~Ortega, J.~Ramírez-Ganga, J.~San Martin, and I.~Vidal are gratefully acknowledged.

\smallskip
E.B.'s work was partially supported by the project ANR-24-CE40-2216 STOIQUES.
D.H.'s work was funded 
by FONDECYT 1231401, 
by
Centro de Modelamiento Matem\'atico (CMM) BASAL fund FB210005 for Center of Excellence from ANID,
and by the project ``Análisis de la sismicidad inducida en minería subterránea'' from CODELCO.
    
V.R's work was supported by Centro de Modelamiento Matem\'atico (CMM) BASAL fund FB210005 for Center of Excellence from ANID.
   
\section*{Appendix}

\begin{lemma}\label{Lemma1}
Let $\mu$ be a positive finite Radon measure. 
Let $(f_j)$, $(g_j)$ be sequences in $L^2(\Omega,\mu)$ and in $L^\infty(\Omega,\mu)$ respectively.
Suppose that 
\[
\sup_{j \in \mathbb{N}} \|g_j\|_{L^\infty} \leq M \quad \text{for some } M > 0.
\]
In addition, assume that $g_j \to g$ $\mu$-a.e.,
and $f_j \to f$ in $L^2(\Omega,\mu)$,  
for some $(f, g) \in L^2(\Omega,\mu) \times L^\infty(\Omega,\mu)$.
Then
\[
f_j g_j \to f g \quad \text{in } L^2(\Omega,\mu).
\]
\end{lemma}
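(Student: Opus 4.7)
The plan is the classical "split and dominate" argument for products of sequences converging in different topologies. The starting point is the decomposition
\[
f_j g_j - f g = (f_j - f)\, g_j + f\,(g_j - g),
\]
which reduces the problem to controlling the two summands in $L^2(\Omega,\mu)$.

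For the first summand, I would use the uniform $L^\infty$ bound on $(g_j)$: since $\|g_j\|_{L^\infty(\mu)} \le M$, one has
\[
\|(f_j - f)\, g_j\|_{L^2(\mu)}^{\,2} \;=\; \int_\Omega |f_j - f|^2 |g_j|^2 \,d\mu \;\le\; M^2 \|f_j - f\|_{L^2(\mu)}^{\,2},
\]
which tends to $0$ by the $L^2$ convergence of $f_j$ to $f$. This step is immediate and requires no further work.

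For the second summand, the argument is dominated convergence applied to $h_j := |f|^2 |g_j - g|^2$. Pointwise $\mu$-a.e.\ convergence $g_j \to g$ forces $h_j \to 0$ $\mu$-a.e. Moreover, since $|g_j| \le M$ a.e.\ and the a.e.\ limit inherits the bound, $|g| \le M$ a.e., so $|g_j - g| \le 2M$ a.e.\ and $h_j \le 4M^2 |f|^2$, which is integrable since $f \in L^2(\Omega,\mu)$. The Lebesgue dominated convergence theorem then yields
\[
\|f(g_j - g)\|_{L^2(\mu)}^{\,2} \;=\; \int_\Omega h_j \,d\mu \;\longrightarrow\; 0.
\]
Combining the two estimates via the triangle inequality gives $\|f_j g_j - fg\|_{L^2(\mu)} \to 0$, as required.

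There is no real obstacle here; the only mild subtlety is making sure the dominating function $4M^2 |f|^2$ is indeed integrable, which follows from $f \in L^2(\Omega,\mu)$ (and the finiteness of $\mu$ is not needed in that step, although the hypothesis is stated in the lemma). The uniform bound on $(g_j)$ plays the double role of controlling the first term and producing the dominant for the second.
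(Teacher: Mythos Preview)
Your proof is correct and uses the same decomposition $f_jg_j-fg=(f_j-f)g_j+f(g_j-g)$ as the paper. The treatment of the first summand is identical. For the second summand, however, the paper invokes Egorov's theorem together with the absolute continuity of $\int |f|^2\,d\mu$: it chooses a small exceptional set $\widetilde E$ on which $\int_{\widetilde E}|f|^2$ is tiny, and uses uniform convergence of $g_j\to g$ on $\Omega\setminus\widetilde E$. Your dominated-convergence argument with dominant $4M^2|f|^2$ is more direct, avoids Egorov, and (as you observe) does not actually require $\mu$ to be finite; the paper's route, by contrast, relies on the finiteness of $\mu$ for Egorov to apply.
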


\begin{proof}
Let $\varepsilon > 0$. Since $f^2 \in L^1$, there exists $\delta > 0$ such that 
for all $\mu$-measurable set $E$,
\[
\mu(E) < \delta \Rightarrow \int_E |f|^2 \, d\mu < \frac{\varepsilon^2}{3^2 \cdot (2M)^2}.
\]

By Egorov's Theorem, there exists a measurable set $\widetilde{E}$ 
such that $\mu(\widetilde{E}) < \delta$ and $g_j \to g$ uniformly in $\Omega \setminus \widetilde{E}$. Then, there exists $J\in \mathbb{N}$ such that for all $j\geq J$,
\begin{eqnarray*}
\|g_j-g\|_{L^{\infty}(\Omega\setminus\widetilde{E})}\leq \frac{\varepsilon}{3\|f\|_{L^2(\Omega)}}, 
&\quad \textrm{and}\quad& 
\|f_j-f\|_{L^2(\Omega)}\leq \frac{\varepsilon}{3M}.
\end{eqnarray*}
We then have
\begin{eqnarray*}
\|f_j g_j - fg\|_{L^2(\Omega)} =
&=& 
\|(f_j - f)g_j + (g_j - g)f\|_{L^2(\Omega)}
\;\leq\; \|(f_j - f)g_j\|_{L^2(\Omega)} + \|(g_j - g)f\|_{L^2(\Omega)} 
\\
&=& \left( \int_{\Omega} (f_j - f)^2 g_j^2 \right)^{1/2}
+ \left( \int_{\Omega} (g_j - g)^2 f^2 \right)^{1/2}
\\
&\leq& M \|f_j - f\|_{L^2(\Omega)} + \left( \int_{\widetilde{E}} f^2 
+ \int_{\Omega \setminus \widetilde{E}} (g_j - g)^2 f^2 \right)^{1/2}
\\
&=& M \|f_j - f\|_{_{L^2(\Omega)}}
+ \left( (2M)^2 \int_{\widetilde{E}} f^2 \, d\mu
+ \left( \frac{\varepsilon}{3 \|f\|_{L^2(\Omega)}} \right)^2 \int_\Omega f^2 \, d\mu \right)^{1/2}
\\
&\leq& 
\frac{\varepsilon}{3}+\sqrt{\frac{\varepsilon^2}{3^2}+\frac{\varepsilon^2}{3^2} }
\;\leq\;  
(1 + \sqrt{2})\frac{\varepsilon}{3}\;\leq\; \varepsilon.
\end{eqnarray*}
\end{proof}


\bibliographystyle{plain}
\bibliography{Biblio}

\end{document}